\newtheorem {algorithm}[theorem]{Algorithm}
\newtheorem {remark}[theorem]{Remark}
\begin{document}

\title{Doubly-Adaptive Artificial Compression Methods for Incompressible Flow}

\author{William Layton \thanks{%
Department of Mathematics, University of Pittsburgh, Pittsburgh, PA 15260,
USA, wjl@pitt.edu; The research herein was partially supported by NSF\
grants DMS1522267, 1817542 and CBET 1609120.} \and Michael McLaughlin 
\thanks{%
Department of Mathematics, University of Pittsburgh, Pittsburgh, PA 15260,
USA, mem266@pitt.edu; The research herein was partially supported by NSF grants DMS1522267, 1817542 and CBET 1609120.}}
\date{\today}
\maketitle

\begin{abstract}
This report presents adaptive artificial compression methods in which the
time-step and artificial compression parameter $\varepsilon $ are
independently adapted. The resulting algorithms are supported by analysis
and numerical tests. The first and second-order methods are embedded. As a
result, the computational, cognitive and space complexities of the adaptive $%
\varepsilon ,k$ algorithms are negligibly greater than that of the simplest,
first-order, constant $\varepsilon ,$ constant $k$ artificial compression
method.
\end{abstract}



\section{Introduction}

\textit{Artificial compression} (AC) methods are based on replacing $\nabla
\cdot u=0$ by $\varepsilon p_{t}+\nabla \cdot u=0$ ($0<\varepsilon $ small),
uncoupling velocity and pressure and advancing the pressure explicitly in
time. Their high speed and low storage requirements recommend them for
complexity bound fluid flow simulations. Unfortunately, \emph{time-accurate}
artificial compression approximations have proven elusive. Time accuracy
(along with increased efficiency and decreased memory) is obtained by \emph{%
time-adaptive} algorithms. To our knowledge, the defect correction based
scheme of Guermond and Minev \cite{GM18} and the non-autonomous AC method in 
\cite{CLM18}, both adapting the time-step with $\varepsilon =k$ (time-step),
are the only previous implicit, time-adaptive AC methods.

This report presents time-adaptive AC algorithms based on a new approach of 
\textit{independently} adapting the AC parameter $\varepsilon $ and
time-step $k$. The methods proceed as follows. A standard, first-order,
implicit method, \eqref{alg:1st} below, is used to advance the \textit{momentum}
equation in the artificial compression equations. A second-order \textit{%
velocity} approximation, \eqref{alg:2nd} below, is then computed at negligible
cost using a time filter adapted from \cite{GL18}. The difference between
the first-order and second-order approximations gives a reliable estimator,
EST(1), for the local error in the momentum equation for the first-order
method and is used to adapt the time step in Algorithm 4.1, Section 4.

Adapting the AC parameter $\varepsilon $\ is more\ challenging. Stability of
the standard AC discrete continuity equation ($\varepsilon p_{t}+\nabla
\cdot u=0$) is unknown for variable $\varepsilon $, \cite{CLM18}. We present
two new, variable $\varepsilon ,$ discrete continuity equations in (1.4)
below\ and prove their unconditional, long-time stability in Theorems 2.1,
2.2 and 3.2. These results show that adaptivity will respond to accuracy
constraints rather than try to correct stability problems with small
time-steps. In these continuity equations, the size of $||\nabla \cdot u||$\
is monitored and used to adapt the choice of the AC parameter $\varepsilon $%
\ (e.g., Algorithm 3.1, Section 3) whereupon the calculation proceeds to the
next time step. The self-adaptive strategy for independently adapting $%
\varepsilon $ also side steps the practical problem of how to pick $%
\varepsilon $ in AC methods and related penalty methods, even for constant
time-steps. The new discrete continuity equations reduce to the standard $%
\varepsilon p_{t}+\nabla \cdot u=0$ for constant $\varepsilon $, improve,
through greater simplicity, a non-autonomous ($\varepsilon =\varepsilon (t)$%
) AC formulation in \cite{CLM18} and yield now three proven stable
extensions of the discrete AC continuity equation to variable $\varepsilon $%
. A comparison of the three is presented in Section 5. Determining if one or
some combination of the three\footnote{%
The stability proof extends to weighted averages of the three discrete
continuity equations.} or some other, yet undetermined, possibility is to be
preferred is an important open problem.

\textbf{The second-order method.} To obtain an $\mathcal{O}(k^{2})$
approximation of the momentum equation (with embedded error estimator),
Algorithms 4.1 and 4.2 incorporate a recent idea of \cite{GL18} of
increasing accuracy and estimating errors by time filters. Theorem 3.2 of
Section 3.1 gives a proof of unconditional, long-time stability of the
second order, constant time-step but variable $\varepsilon $ method. The
resulting embedded structure of Algorithms 4.1 and 4.2 suggests
low-complexity, variable-order methods may be possible once an adaptive $%
\varepsilon $\ strategy is well developed. 

The second-order method is a one leg method. Reliable estimators of the
local truncation error (LTE) in one leg methods are expensive as detailed in 
\cite{DLL18}. An inexpensive estimator, EST(2) in Algorithm 4.2, of the LTE
in the method's linear multistep twin, based on\ a second time filter, is
presented. For the one leg method, this estimator is inexpensive but
heuristic. The doubly adapted, second-order method in Algorithm 4.2 is
tested in Section 5. The embedded structure of the first and second-order
method suggests that adapting the \textit{method order} in addition to the
time-step and AC parameter $\varepsilon $\ may increase accuracy and
efficiency further.

Three stable treatments of the momentum equation (first, second and even
variable order) are possible. Three stable treatments of the variable $%
\varepsilon $\ continuity are now possible: two in (1.1) below and one in 
\cite{CLM18}. The result is nine adaptive AC methods with computational
complexity comparable to the common first-order method, described next.

\subsection{Review of a Common Artificial Compression Method}

Denote by $u$ the velocity, $p$ the pressure, $\nu $ the kinematic
viscosity, and $f$ the external force. Consider the slightly
compressible/hyposonic\footnote{%
We do not include a traditional superscript "$\varepsilon $" as we shall
focus only on AC models and methods.}, \cite{Z06}, approximation to the
incompressible Navier-Stokes equations in a domain $\Omega $ in ${{\mathbb{R}%
}}^{d},d=2,3$ 
\begin{equation}
\begin{cases}
u_{t}+u\cdot \nabla u+{\frac{1}{2}}(\nabla \cdot u)u+\nabla p-\nu \Delta u=f
\\ 
\varepsilon p_{t}+\nabla \cdot u=0,\text{ where }0<\varepsilon \text{ is
small.}%
\end{cases}
\label{AC}
\end{equation}%
This is the most common of several possible formulations reviewed in Section
1.1 of \cite{CLM18}. To present methods herein we will consistently suppress
the secondary spacial discretization\footnote{%
All stability results proven herein hold, by the same proof, for standard
variational spatial discretizations such as finite element methods with
div-stable elements.}. Let $u^{\ast }$ denote the standard (second order)
linear extrapolation of $u$ from previous values\footnote{%
Temperton and Staniforth \cite{TS87} advocated even higher order
extrapolation.} to $t_{n+1}$%
\begin{equation*}
u^{\ast }=\left( 1+\frac{k_{n+1}}{k_{n}}\right) u_{n}-\frac{k_{n+1}}{k_{n}}%
u_{n-1}\left( =2u_{n}-u_{n-1}\text{ for constant time-step}\right) .
\end{equation*}%
To fix ideas, among many possible, e.g., \cite{GMS06}, \cite{GM15}, \cite%
{GM17}, \cite{JL04}, \cite{K02}, \cite{P97}, \cite{DLM17}, \cite{OA10}, \cite%
{YBC16}, consider a common, constant time-step, semi-implicit time
discretization of (\ref{AC}): 
\begin{gather}
\frac{u_{n+1}-u_{n}}{k}+u^{\ast }\cdot \nabla u_{n+1}+{\frac{1}{2}}(\nabla
\cdot u^{\ast })u_{n+1}+\nabla p_{n+1}-\nu \Delta u_{n+1}=f(t_{n+1}),
\label{eq:standardAC1} \\
{\text{ \ \ \ }}\varepsilon \frac{p_{n+1}-p_{n}}{k}+\nabla \cdot u_{n+1}=0. 
\notag
\end{gather}%
Here $k$ is the time-step, $t_{n}=nk,$ $u_{n}$, $p_{n}$ are approximations
to the velocity and pressure at $t=t_{n}$. This has consistency error $%
\mathcal{O}(k+\varepsilon )$ leading to the most common choice of selecting $%
\varepsilon =k$ to balance errors. Since $\nabla p_{n+1}=$ $\nabla
p_{n}-(k/\varepsilon )\nabla \nabla \cdot u_{n+1}$, this uncouples into a
velocity solve followed by an algebraic pressure update%
\begin{gather}
\frac{u_{n+1}-u_{n}}{k}+u^{\ast }\cdot \nabla u_{n+1}+{\frac{1}{2}}(\nabla
\cdot u^{\ast })u_{n+1}-\frac{k}{\varepsilon }\nabla \nabla
\cdot u_{n+1} \notag \\
-\nu \Delta u_{n+1}=-\nabla p_{n}+f_{n+1},  \notag \\
{\text{ {then given }}u_{n+1}\text{{: \ \ \ }}}p_{n+1}=p_{n}-\frac{k}{%
\varepsilon }\nabla \cdot u_{n+1}.  \label{eq:StandardACmethod2}
\end{gather}%
For constant $\varepsilon ,k$, this method is unconditionally, nonlinearly,
long-time stable, e.g., \cite{GMS06}, \cite{GM15}, \cite{S92}, \cite{S92a}.
Its long-time stability for \textit{variable} $\varepsilon ,k$ is an open
problem, \cite{CLM18}.

\subsection{New Methods for Variable $\protect\varepsilon ,k$}

Although well motivated, the \ choice $\varepsilon =k$ cannot be more than a
step to a correct choice. First observe that $Units(\varepsilon
)=Time^{2}/Length^{3}$ while $Units(k)=Time$. Thus, a correct choice of $%
\varepsilon $\ should be scaled to be dimensionally consistent and
afterwards the constant multiplier optimized. Aside from dimensional
inconsistency, the standard choice $\varepsilon =k$ ignores the different
roles of $\varepsilon $ and $k$. To leading orders, the consistency error in
the \textit{continuity} equation is $\mathcal{O}(\varepsilon )$, independent
of $k$, and the consistency error in the \textit{momentum} equation is $%
\mathcal{O}(k)$, independent of $\varepsilon $. This observation on the
standard method (\ref{eq:standardAC1}), (\ref{eq:StandardACmethod2})
motivates the development plan for the doubly adaptive algorithms herein:

\begin{itemize}
\item \textit{Develop first (Section 2) and second (Section 3) order methods
stable for variable }$k,\varepsilon $\textit{.}
\end{itemize}

\begin{itemize}
\item \textit{Adapt }$\varepsilon _{n}$\textit{\ to control the consistency
error in the continuity equation by monitoring }$||\nabla \cdot u||$\textit{%
, Sections 3, 4.}
\end{itemize}

\begin{itemize}
\item \textit{Develop inexpensive estimators for momentum equation
consistency error and adapt }$k=k_{n}$\textit{\ for its control, Section 4.}
\end{itemize}

\begin{itemize}
\item \textit{Use (Section 4) and test (Section 5) the estimators in a
doubly adaptive, variable }$\varepsilon ,k$\textit{, algorithm.}
\end{itemize}

In adaptive methods, \textit{strong stability is necessary}, so $\varepsilon
_{n},k_{n}$ can be adapted for time-accuracy rather than to correct
instabilities. One key difficulty, resolved by the two methods (\ref%
{eqDiscreteContinuityEqns}) below, is that useful \textit{stability is
unknown} for the common AC method (\ref{eq:standardAC1}) with variable $%
\varepsilon $, see \cite{CLM18}, and even for the continuum model (\ref{AC})
with $\varepsilon =\varepsilon (t)$. A second key difficulty is that
(unconditional, nonlinear) G-stability for variable time-steps is uncommon%
\footnote{%
To our knowledge, the only such two-step method is the little explored one
of Dahlquist, Liniger, and Nevanlinna \cite{DLN83}. This second issue may be
resolvable by a variable (first and second) order implementation since it
would include the A-stable, fully implicit method.}. (For example, the
popular BDF2 method loses $A-$stability for increasing time-steps.)

The continuity equation is treated by either a geometric average (GA-Method)
or a minimum term (min-Method) as follows. Given $u_{n},p_{n},\varepsilon
_{n}$, select $\varepsilon _{n+1},k_{n+1}$ calculate $u_{n+1}$\ then%
\footnote{%
A convex combination of the two continuity equations discretizations is also
stable.}%
\begin{equation}
\begin{array}{ccc}
\text{GA-Method:} &  & \frac{\varepsilon _{n+1}p_{n+1}-\sqrt{\varepsilon
_{n+1}\varepsilon _{n}}p_{n}}{k_{n+1}}+\nabla \cdot u_{n+1}=0,\text{ or} \\ 
&  &  \\ 
\text{min-Method:} &  & \frac{\varepsilon _{n+1}p_{n+1}-\min \{\varepsilon
_{n+1},\varepsilon _{n}\}p_{n}}{k_{n+1}}+\nabla \cdot u_{n+1}=0.%
\end{array}
\label{eqDiscreteContinuityEqns}
\end{equation}%
These methods are proven in Section 2 to be unconditionally, variable $%
\varepsilon ,k$ stable. For the discrete momentum equation, recall $u^{\ast }
$ is an extrapolated approximation to $u(t_{n+1})$. The first-order method's
momentum equation is the standard one (\ref{eq:standardAC1}) above given by%
\begin{equation}\label{alg:1st}
\frac{u_{n+1}-u_{n}}{k_{n+1}}+u^{\ast }\cdot \nabla u_{n+1}+{\frac{1}{2}}%
(\nabla \cdot u^{\ast })u_{n+1}+\nabla p_{n+1}-\nu \Delta u_{n+1}=f_{n+1}. 
\tag{1st Order}
\end{equation}%
The (linearly implicit) treatment of the nonlinear term is inspired by Baker 
\cite{B76}. The second method, adapted from \cite{GL18}, adds a time filter
to obtain $\mathcal{O}(k^{2})$\ accuracy and automatic error estimation as
follows. Let the time-step ratio be denoted $\tau =k_{n+1}/k_{n}$. Call $%
u_{n+1}^{1}$\ the solution obtained from the first-order method (1st Order)
above. The second-order approximation $u_{n+1}$ is obtained by filtering $%
u_{n+1}^{1}$:%
\begin{equation}\label{alg:2nd}
\begin{split}
\frac{u_{n+1}^{1}-u_{n}}{k_{n+1}}&+u^{\ast }\cdot \nabla u_{n+1}^{1}+{\frac{1%
}{2}}(\nabla \cdot u^{\ast })u_{n+1}^{1}+\nabla p_{n+1}-\nu \Delta
u_{n+1}^{1}=f_{n+1},  \\
\text{For }\tau &=\frac{k_{n+1}}{k_{n}}\text{ let }\alpha _{1}=\frac{\tau
(1+\tau )}{(1+2\tau )},\text{ then}:  \\
u_{n+1}&=u_{n+1}^{1}-\frac{\alpha _{1}}{2}\left\{ \frac{2k_{n}}{k_{n}+k_{n+1}}%
u_{n+1}^{1}-2u_{n}+\frac{2k_{n+1}}{k_{n}+k_{n+1}}u_{n-1}\right\} .  
\end{split}\tag{2nd
Order}
\end{equation}%
Denote by $D_{2}(n+1)$ the quantity above in braces%
\begin{equation*}
D_{2}(n+1):=\frac{2k_{n}}{k_{n}+k_{n+1}}u_{n+1}^{1}-2u_{n}+\frac{2k_{n+1}}{%
k_{n}+k_{n+1}}u_{n-1}.
\end{equation*}%
Note that $D_{2}(n+1)$ is $2k_{n}k_{n+1}\times $(a second divided
difference).

The usual $L^{2}$ norm $||\cdot ||$ and inner product $(\cdot ,\cdot )$ are
denoted%
\begin{equation*}
||v||=\left( \int_{\Omega }|v(x)|^{2}dx\right) ^{1/2}\text{ and }%
(v,w)=\int_{\Omega }v(x)\cdot w(x)dx.
\end{equation*}%
A simple estimate of the local error in the first-order approximation $%
u_{n+1}^{1}$ is given by a measure (here the\ $L^{2}$ norm) of the
difference of the two approximations%
\begin{equation*}
EST(1)=||u_{n+1}-u_{n+1}^{1}||=\frac{\alpha _{1}}{2}||D_{2}(n+1)||.
\end{equation*}

\textbf{Estimating the error in the second-order approximation.} Naturally
one would like to use the second-order approximation for more\ than an
estimator. It is possible to use $EST(1)$ above as a pessimistic estimator
for $u_{n+1}$. In Section 3 we show that, eliminating the intermediate step $%
u_{n+1}^{1}$, the second-order method is equivalent to the second-order, one
leg method (\ref{eq:secondOrderMoEqn}) below. Estimation of the LTE for this
OLM cannot be done by a simple time filter for reasons delineated in \cite%
{DLL18} and based on classical analysis of the LTE in OLMs of Dahlquist. We
test an inexpensive but heuristic estimator that can be calculated by a
second time filter. $EST(2)$ below is an LTE estimator for the OLMs linear
multi-step twin. To estimate the local error in the second order
approximation we use the third divided difference with multiplier chosen (by
a lengthy but elementary Taylor series calculation) to cancel the first term
of the LTE of the methods linear multi-step\ twin%
\begin{eqnarray*}
EST(2) &=&\frac{\alpha_2}{6}\left\Vert \frac{3k_{n-1}}{%
k_{n+1}+k_{n}+k_{n-1}}D_{2}(n+1)-\frac{3k_{n-1}}{k_{n+1}+k_{n}+k_{n-1}}%
D_{2}(n)\right\Vert\\ \text{ where} \\
\alpha_2 &=&\frac{\tau _{n}(\tau _{n+1}\tau _{n}+\tau _{n}+1)(4\tau
_{n+1}^{3}+5\tau _{n+1}^{2}+\tau _{n+1})}{3(\tau _{n}\tau _{n+1}^{2}+4\tau
_{n}\tau _{n+1}+2\tau _{n+1}+\tau _{n}+1)},\text{ and }\tau
_{n}=k_{n}/k_{n-1}.
\end{eqnarray*}%
The resulting adaptive algorithm uncouples like (\ref{eq:StandardACmethod2})
into a velocity update with a grad-div term then an algebraic pressure
update. More reliable but more expensive estimators are possible. The above
inexpensive but heuristic one is tested herein because the motivation for AC
methods is often based on the need for faster and reduced memory algorithms
in specific applications.

\ Section 2 presents the analysis of the two first-order methods, proving
long-time, unconditional stability for variable $\varepsilon ,k$. This
analysis develops the key treatment of the discrete continuity equation
necessary for stability. Section 3.1 gives a proof of unconditional, long
time stability for the variable $\varepsilon $, constant $k$ \textit{second}
order method. This proof can be extended to decreasing time-steps but not
increasing time-steps.

\subsection{Related work}

Artificial compression (AC) methods were introduced in the 1960's by Chorin,
Oskolkov and Temam. Their mathematical foundation has been extensively
developed by Shen \cite{S96}, \cite{S92a}, \cite{S92}, \cite{S96a} and Prohl 
\cite{P97}. Recent work includes \cite{K02}, \cite{DLM17}, \cite{GM15}, \cite%
{GM17}, \cite{JL04}, \cite{OA10} and \cite{YBC16}. The GA-method (geometric
averaging method) herein is motivated by work in \cite{CHL12}\ for
uncoupling atmosphere-ocean problems stably.

There has been extensive development of adaptive methods for \textit{assured}
accuracy in fully coupled, $v-p$ discretizations, e.g., \cite{HJ07}, and
adaptive methods based on estimates of local truncation errors including 
\cite{HPG15}, \cite{KGGS10}, \cite{VV13}. In complement, the work herein
aims at methods that use less expensive local (rather than global) error
estimators, do not provide assured time-accuracy but \textit{emphasize}
(consistent with the artificial compression methods) low cognitive,
computational, and space complexity. Aside from \cite{CLM18} and Guermond
and Minev \cite{GM18}, extension of implicit, time-adaptive methods to
artificial compression discretizations is undeveloped.

Herein accuracy is increased and local errors estimated by time filters.
Other approaches are clearly possible. Time filters are an important tool in
GFD to correct weak instabilities and extend forecast horizons, \cite{A72}, 
\cite{LLT16}, \cite{R69}, \cite{W09}, \cite{W11}. In \cite{GL18}, it was
noticed that a time filter can also increase the convergence rate of the
backward Euler method and estimate errors. G-stability of the resulting
(constant time-step) time discretization was recently proven for the\textit{%
\ fully-coupled}, velocity-pressure Navier-Stokes equations in \cite{DLZ18}.

\section{First-Order, Variable $k,\protect\varepsilon $ Methods}

This section establishes unconditional, long-time, nonlinear stability of
the two variable $k,\varepsilon $\ first-order methods of Section 1.2 in the
usual $L^{2}(\Omega )$ norm, denoted $||\cdot ||$ with associated inner
product $(\cdot ,\cdot )$. The methods differ in the treatment of the
discrete continuity equation and reduce to the standard AC method (\ref%
{eq:standardAC1}) for constant $\varepsilon ,k$. We prove that the first
order implicit discretization of the momentum equation with both new methods
(\ref{eq:minMethod}), (\ref{eq:GAmethod}) are unconditionally, nonlinearly,
long-time stable without assumptions on $\varepsilon _{n},k_{n}$. We study
these new methods in a bounded, regular domain $\Omega $ subject to the
initial and boundary conditions%
\begin{gather}
u_{0}=u_{0}(x)\text{ and }p_{0}=p_{0}(x),\text{ in }\Omega \text{,}  \notag
\\
u_{n}=0\text{ on }\partial \Omega \text{ \ for }t>0\text{.}  \notag
\end{gather}%
The two, first-order methods are: Given $u_{n},p_{n},\varepsilon _{n}$,
select $\varepsilon _{n+1},k_{n+1}$ and%
\begin{gather}
\frac{u_{n+1}-u_{n}}{k_{n+1}}+u^{\ast }\cdot \nabla u_{n+1}+{\frac{1}{2}}%
(\nabla \cdot u^{\ast })u_{n+1}+\nabla p_{n+1}-\nu \Delta u_{n+1}=f_{n+1}, 
\notag \\
\frac{\varepsilon _{n+1}p_{n+1}-\hat{\varepsilon}p_{n}}{k_{n+1}}+\nabla
\cdot u_{n+1}=0,\text{ where}  \notag \\
\hat{\varepsilon}=\min \{\varepsilon _{n+1},\varepsilon _{n}\}\text{ \
\ for the min-Method and}  \label{eq:minMethod} \\
\hat{\varepsilon}=\sqrt{\varepsilon _{n+1},\varepsilon _{n}}\text{ \ \
\ \ for the GA-Method }  \label{eq:GAmethod}
\end{gather}%
For constant $\varepsilon $ both methods reduce to the standard method (\ref%
{eq:standardAC1}), (\ref{eq:StandardACmethod2}) for which stability is
known. Thus, \textit{the interest is stability for variable} $\varepsilon $.

\textbf{Stability of the min-Method.} It is useful to recall that 
\begin{equation*}
\left( \varepsilon _{n+1}-\varepsilon _{n}\right) ^{+}=\max \{0,\varepsilon
_{n+1}-\varepsilon _{n}\}=\varepsilon _{n+1}-\min \{0,\varepsilon
_{n+1}-\varepsilon _{n}\}.
\end{equation*}

\begin{theorem}[Stability of the min Method]
The variable $\varepsilon ,k$ min-Method is unconditionally, long-time
stable. For any $N>0$ the energy equality holds: 
\begin{gather*}
\frac{1}{2}\int_{\Omega }|u_{N}|^{2}+\varepsilon _{N}|p_{N}|^{2}dx+ \\
\sum_{n=0}^{N-1}\frac{1}{2}\int_{\Omega }\min \{\varepsilon
_{n+1},\varepsilon _{n}\}(p_{n+1}-p_{n})^{2}+\left( \varepsilon
_{n+1}-\varepsilon _{n}\right) ^{+}p_{n+1}{}^{2}\\
+\left( \varepsilon_{n}-\varepsilon _{n+1}\right) ^{+}p_{n}{}^{2}dx 
+\sum_{n=0}^{N-1}\int_{\Omega }\frac{1}{2}|u_{n+1}-u_{n}|^{2}+k_{n+1}\nu
|\nabla u_{n+1}|^{2}dx \\
=\frac{1}{2}\int_{\Omega }|u_{0}|^{2}+\varepsilon
_{0}p_{0}{}^{2}dx+\sum_{n=0}^{N-1}k_{n+1}\int_{\Omega }u_{n+1}\cdot
f_{n+1}dx.
\end{gather*}%
Consequently, the stability bound holds:%
\begin{gather*}
\frac{1}{2}\int_{\Omega }|u_{N}|^{2}+\varepsilon _{N}p_{N}{}^{2}dx+ \\
\sum_{n=0}^{N-1}\frac{1}{2}\int_{\Omega }\min \{\varepsilon
_{n+1},\varepsilon _{n}\}(p_{n+1}-p_{n})^{2}dx+\left( \varepsilon
_{n+1}-\varepsilon _{n}\right) ^{+}p_{n+1}^{2}\\
+\left( \varepsilon_{n}-\varepsilon _{n+1}\right) ^{+}p_{n}{}^{2}dx 
+\sum_{n=0}^{N-1}\frac{1}{2}\int_{\Omega }|u_{n+1}-u_{n}|^{2}+k_{n+1}\nu
|\nabla u_{n+1}|^{2}dx \\
\leq \frac{1}{2}\int_{\Omega }|u_{0}|^{2}+\varepsilon
_{0}p_{0}{}^{2}dx+\sum_{n=0}^{N-1}k_{n+1}\frac{1}{2\nu }||f_{n+1}||_{-1}^{2}.
\end{gather*}
\end{theorem}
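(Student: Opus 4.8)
The plan is to run the standard discrete energy argument, the only genuinely new ingredient being an exact algebraic rewriting of the variable-$\varepsilon$ continuity term. First I would test the momentum equation with $k_{n+1}u_{n+1}$ and the continuity equation with $k_{n+1}p_{n+1}$, then add the two. The linearly implicit convection is skew-symmetric in its last two arguments: since $u_{n+1}=0$ on $\partial\Omega$, integration by parts gives $(u^{\ast}\cdot\nabla u_{n+1},u_{n+1})=-\tfrac12((\nabla\cdot u^{\ast})u_{n+1},u_{n+1})$, hence
\[ (u^{\ast}\cdot\nabla u_{n+1},u_{n+1})+\tfrac12((\nabla\cdot u^{\ast})u_{n+1},u_{n+1})=0, \]
so the nonlinear terms drop out entirely. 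Likewise the pressure coupling cancels, $k_{n+1}(\nabla p_{n+1},u_{n+1})=-k_{n+1}(p_{n+1},\nabla\cdot u_{n+1})$ exactly offsetting the $k_{n+1}(\nabla\cdot u_{n+1},p_{n+1})$ coming from the continuity equation. What survives is
\[ (u_{n+1}-u_n,u_{n+1})+k_{n+1}\nu\|\nabla u_{n+1}\|^2+(\varepsilon_{n+1}p_{n+1}-\hat\varepsilon\,p_n,p_{n+1})=k_{n+1}(f_{n+1},u_{n+1}). \]

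The velocity time-difference is handled by the polarization identity $(u_{n+1}-u_n,u_{n+1})=\tfrac12\|u_{n+1}\|^2-\tfrac12\|u_n\|^2+\tfrac12\|u_{n+1}-u_n\|^2$. The crux, and the step I expect to be the main obstacle, is the pressure term, since the naive polarization no longer telescopes when $\varepsilon$ varies. The resolution is the pointwise identity
\[ \varepsilon_{n+1}p_{n+1}^2-\hat\varepsilon\,p_np_{n+1}=\tfrac12\varepsilon_{n+1}p_{n+1}^2-\tfrac12\varepsilon_n p_n^2+\tfrac12\hat\varepsilon\,(p_{n+1}-p_n)^2+\tfrac12(\varepsilon_{n+1}-\varepsilon_n)^{+}p_{n+1}^2+\tfrac12(\varepsilon_n-\varepsilon_{n+1})^{+}p_n^2, \]
which I would verify by matching the coefficients of $p_{n+1}^2$, $p_n^2$ and $p_np_{n+1}$ using the splittings $\varepsilon_{n+1}-\hat\varepsilon=(\varepsilon_{n+1}-\varepsilon_n)^{+}$ and $\varepsilon_n-\hat\varepsilon=(\varepsilon_n-\varepsilon_{n+1})^{+}$, valid precisely because $\hat\varepsilon=\min\{\varepsilon_{n+1},\varepsilon_n\}$. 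The decisive observation is that the three trailing terms are manifestly nonnegative (as $\hat\varepsilon\ge0$ and both positive parts are $\ge0$) for \emph{arbitrary} variation of $\varepsilon$, which is exactly what will deliver unconditional stability, while the first two terms telescope.

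With both identities in hand, I would sum the combined relation over $n=0,\dots,N-1$. The leading velocity and pressure energies telescope to $\tfrac12\|u_N\|^2+\tfrac12\varepsilon_N\|p_N\|^2$ minus the initial data, while the viscous dissipation, the numerical dissipation $\tfrac12\|u_{n+1}-u_n\|^2$, and the pressure-difference and positive-part contributions accumulate as the stated sums. Since no inequality has been used anywhere, this produces the energy equality verbatim. For the stability bound, the only remaining task is to estimate the forcing: using the dual norm, $k_{n+1}(f_{n+1},u_{n+1})\le k_{n+1}\|f_{n+1}\|_{-1}\|\nabla u_{n+1}\|$, and Young's inequality, I would split off a multiple of $k_{n+1}\nu\|\nabla u_{n+1}\|^2$ to absorb into the viscous dissipation on the left, leaving the $\tfrac{1}{2\nu}k_{n+1}\|f_{n+1}\|_{-1}^2$ term on the right; discarding the nonnegative pressure terms on the left then gives the claimed bound. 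No CFL-type or monotonicity restriction on $\varepsilon_n,k_n$ enters at any point, which is precisely the unconditional, long-time stability asserted.
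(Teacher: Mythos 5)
Your proposal is correct and follows essentially the same route as the paper: test the momentum equation with $k_{n+1}u_{n+1}$ and the continuity equation with $k_{n+1}p_{n+1}$, cancel the nonlinear and pressure-coupling terms, and resolve the variable-$\varepsilon$ pressure term by the same identity the paper records as (2.3), using $\varepsilon_{n+1}-\hat\varepsilon=(\varepsilon_{n+1}-\varepsilon_n)^{+}$ and $\varepsilon_n-\hat\varepsilon=(\varepsilon_n-\varepsilon_{n+1})^{+}$, then telescope and apply Cauchy--Schwarz--Young. The only cosmetic difference is that you verify the key identity by matching coefficients while the paper derives it from the polarization identity with a case split, which is the same algebra.
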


\begin{proof}
First we note that using the polarization identity, algebraic rearrangement
and considering the cases $\varepsilon _{n+1}>\varepsilon _{n}$\ and $%
\varepsilon _{n+1}<\varepsilon _{n}$ we have 
\begin{gather*}
(\varepsilon _{n+1}p_{n+1}-\min \{\varepsilon _{n+1},\varepsilon
_{n}\}p_{n},p_{n+1}) \\
=\varepsilon _{n+1}||p_{n+1}||^{2}-\min \{\varepsilon _{n+1},\varepsilon
_{n}\}(p_{n},p_{n+1}) \\
=\varepsilon _{n+1}||p_{n+1}||^{2}-\min \{\varepsilon _{n+1},\varepsilon
_{n}\}\left\{ \frac{1}{2}||p_{n}||^{2}+\frac{1}{2}||p_{n+1}||^{2}-\frac{1}{2}%
||p_{n}-p_{n+1}||^{2}\right\} \\
=\left( \varepsilon _{n+1}-\frac{1}{2}\min \{\varepsilon _{n+1},\varepsilon
_{n}\}\right) ||p_{n+1}||^{2} \\
-\frac{1}{2}\min \{\varepsilon _{n+1},\varepsilon _{n}\}||p_{n}||^{2}+\frac{1%
}{2}\min \{\varepsilon _{n+1},\varepsilon _{n}\}||p_{n}-p_{n+1}||^{2} \\
=\frac{1}{2}\varepsilon _{n+1}||p_{n+1}||^{2}-\frac{1}{2}\varepsilon
_{n}||p_{n}||^{2}+\frac{1}{2}\min \{\varepsilon _{n+1},\varepsilon
_{n}\}||p_{n}-p_{n+1}||^{2}+ \\
+\frac{1}{2}\left( \varepsilon _{n+1}-\min \{\varepsilon _{n+1},\varepsilon
_{n}\}\right) ||p_{n+1}||^{2}+\frac{1}{2}\left( \varepsilon _{n}-\min
\{\varepsilon _{n+1},\varepsilon _{n}\}\right) ||p_{n}||^{2}.
\end{gather*}%
We have $\varepsilon _{n+1}-\min \{\varepsilon _{n+1},\varepsilon
_{n}\}=\left( \varepsilon _{n+1}-\varepsilon _{n}\right) ^{+}$ and $%
\varepsilon _{n}-\min \{\varepsilon _{n+1},\varepsilon _{n}\}=$\newline
$\left( \varepsilon _{n}-\varepsilon _{n+1}\right) ^{+}.$ Thus,%
\begin{gather}
(\varepsilon _{n+1}p_{n+1}-\min \{\varepsilon _{n+1},\varepsilon
_{n}\}p_{n},p_{n+1})=  \label{eq:PressureTermsMinMethod} \\
=\frac{1}{2}\varepsilon _{n+1}||p_{n+1}||^{2}-\frac{1}{2}\varepsilon
_{n}||p_{n}||^{2}+\frac{1}{2}\min \{\varepsilon _{n+1},\varepsilon
_{n}\}||p_{n}-p_{n+1}||^{2}+  \notag \\
+\frac{1}{2}\left( \varepsilon _{n+1}-\varepsilon _{n}\right)
^{+}||p_{n+1}||^{2}+\frac{1}{2}\left( \varepsilon _{n}-\varepsilon
_{n+1}\right) ^{+}||p_{n}||^{2}.  \notag
\end{gather}%
With this identity, take the inner product of the first equation with $%
k_{n+1}u_{n+1}$, the second with $k_{n+1}p_{n+1}$, integrate over the flow
domain, integrate by parts, use skew symmetry, use the polarization identity
twice and add. This yields 
\begin{gather*}
\frac{1}{2}\int_{\Omega
}|u_{n+1}|^{2}-|u_{n}|^{2}+|u_{n+1}-u_{n}|^{2}dx+\int_{\Omega }k_{n+1}\nu
|\nabla u_{n+1}|^{2}dx \\
\frac{1}{2}\int_{\Omega }(\varepsilon _{n+1}p_{n+1}-\min \{\varepsilon
_{n+1},\varepsilon _{n}\}p_{n})p_{n+1}dx=k_{n+1}\int_{\Omega }u_{n+1}\cdot
f_{n+1}dx.
\end{gather*}%
From (\ref{eq:PressureTermsMinMethod}) the energy equality becomes 
\begin{gather*}
\frac{1}{2}\int_{\Omega }|u_{n+1}|^{2}+\varepsilon _{n+1}|p_{n+1}|^{2}dx-%
\frac{1}{2}\int_{\Omega }|u_{n}|^{2}+\varepsilon
_{n}p_{n}{}^{2}dx\\
+\int_{\Omega }k_{n+1}\nu |\nabla u_{n+1}|^{2}dx+ 
\frac{1}{2}\int_{\Omega }(u_{n+1}-u_{n})^{2}+\min \{\varepsilon
_{n+1},\varepsilon _{n}\}(p_{n}-p_{n+1})^{2} \\
+\left( \varepsilon _{n+1}-\varepsilon _{n}\right) ^{+}p_{n+1}{}^{2}+\left(
\varepsilon _{n}-\varepsilon _{n+1}\right)
^{+}p_{n}{}^{2}dx=k_{n+1}\int_{\Omega }u_{n+1}\cdot f_{n+1}dx.
\end{gather*}%
Upon summation the first two terms telescope, completing the proof of the
energy equality. The stability estimate follows from the energy equality and
the Cauchy-Schwarz-Young inequality.
\end{proof}

\bigskip The stability analysis shows that the numerical dissipation in the
min-Method is%
\begin{eqnarray*}
\begin{array}{c}
\text{Numerical} \\ 
\text{Dissipation}%
\end{array}%
\text{ } &\text{=}&\frac{1}{2}k_{n+1}^{2}\int_{\Omega }|\frac{u_{n+1}-u_{n}}{%
k_{n+1}}|^{2}+\min \{\varepsilon _{n+1},\varepsilon _{n}\}(\frac{%
p_{n+1}-p_{n}}{k_{n+1}})^{2}+ \\
&&+\left( \frac{\varepsilon _{n+1}-\varepsilon _{n}}{k_{n+1}}\right)
^{+}p_{n+1}^{2}+\left( \frac{\varepsilon _{n}-\varepsilon _{n+1}}{k_{n+1}}%
\right) ^{+}p_{n}{}^{2}dx.
\end{eqnarray*}

\textbf{The GA-Method. }The proof of stability of the GA-method differs from
the last proof only in the treatment of the variable $\varepsilon $ term,
resulting is a different numerical dissipation for the method.

\begin{theorem}[Stability of GA-Method]
The variable $\varepsilon ,k$, first-order GA-Method is unconditionally,
long-time stable. For any $N>0$ the energy equality holds: 
\begin{gather*}
\frac{1}{2}\int_{\Omega }|u_{N}|^{2}+\varepsilon _{N}|p_{N}|^{2}dx+ \\
+\sum_{n=0}^{N-1}\frac{1}{2}\int_{\Omega }|u_{n+1}-u_{n}|^{2}+(\sqrt{%
\varepsilon _{n+1}}p_{n+1}-\sqrt{\varepsilon _{n}}p_{n})^{2}+2k_{n+1}\nu
|\nabla u_{n+1}|^{2}dx \\
=\frac{1}{2}\int_{\Omega }|u_{0}|^{2}+\varepsilon
_{0}|p_{0}|^{2}dx+\sum_{n=0}^{N-1}k_{n+1}\int_{\Omega }u_{n+1}\cdot f_{n+1}dx
\end{gather*}%
and the stability bound holds:%
\begin{gather*}
\frac{1}{2}\int_{\Omega }|u_{N}|^{2}+\varepsilon _{N}|p_{N}|^{2}dx+ \\
+\sum_{n=0}^{N-1}\left[ \frac{1}{2}\int_{\Omega }|u_{n+1}-u_{n}|^{2}+(\sqrt{%
\varepsilon _{n+1}}p_{n+1}-\sqrt{\varepsilon _{n}}p_{n})^{2}+k_{n+1}\nu
|\nabla u_{n+1}|^{2}dx\right] \\
\leq \frac{1}{2}\int_{\Omega }|u_{0}|^{2}+\varepsilon
_{0}|p_{0}|^{2}dx+\sum_{n=0}^{N-1}k_{n+1}\frac{1}{2\nu }||f_{n+1}||_{-1}^{2}.
\end{gather*}
\end{theorem}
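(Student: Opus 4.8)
The plan is to follow the proof of Theorem~2.1 essentially verbatim, changing only the algebraic treatment of the discrete continuity term, since the momentum equation is identical in the two methods. I would test the momentum equation with $k_{n+1}u_{n+1}$ and the GA continuity equation \eqref{eqDiscreteContinuityEqns} with $k_{n+1}p_{n+1}$, integrate over $\Omega$, integrate the viscous and pressure-gradient terms by parts, and add the two identities. Exactly as in the min-Method, the coupling terms $k_{n+1}(\nabla p_{n+1},u_{n+1})$ and $k_{n+1}(\nabla\cdot u_{n+1},p_{n+1})$ cancel, the skew-symmetrized nonlinearity $u^{\ast}\cdot\nabla u_{n+1}+\tfrac12(\nabla\cdot u^{\ast})u_{n+1}$ contributes nothing when tested against $u_{n+1}$, and polarization applied to $(u_{n+1}-u_n,u_{n+1})$ produces $\tfrac12\left(||u_{n+1}||^2-||u_n||^2+||u_{n+1}-u_n||^2\right)$. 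The viscous term gives $k_{n+1}\nu||\nabla u_{n+1}||^2$ and the forcing gives $k_{n+1}(u_{n+1},f_{n+1})$. The single genuinely new computation is the pressure term.

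The crux is the identity for $(\varepsilon_{n+1}p_{n+1}-\sqrt{\varepsilon_{n+1}\varepsilon_{n}}\,p_n,\,p_{n+1})$, and I would obtain it by the exact factorization
\begin{equation*}
\varepsilon_{n+1}p_{n+1}-\sqrt{\varepsilon_{n+1}\varepsilon_{n}}\,p_n=\sqrt{\varepsilon_{n+1}}\left(\sqrt{\varepsilon_{n+1}}\,p_{n+1}-\sqrt{\varepsilon_{n}}\,p_n\right).
\end{equation*}
Writing $a=\sqrt{\varepsilon_{n+1}}\,p_{n+1}$ and $b=\sqrt{\varepsilon_{n}}\,p_n$ and applying the polarization identity $(a-b,a)=\tfrac12||a||^2-\tfrac12||b||^2+\tfrac12||a-b||^2$ then gives
\begin{equation*}
(\varepsilon_{n+1}p_{n+1}-\sqrt{\varepsilon_{n+1}\varepsilon_{n}}\,p_n,\,p_{n+1})=\tfrac12\varepsilon_{n+1}||p_{n+1}||^2-\tfrac12\varepsilon_{n}||p_n||^2+\tfrac12||\sqrt{\varepsilon_{n+1}}\,p_{n+1}-\sqrt{\varepsilon_{n}}\,p_n||^2.
\end{equation*}
In contrast to the min-Method, the geometric average needs \emph{no} case split on the sign of $\varepsilon_{n+1}-\varepsilon_{n}$: the factorization is exact and yields a single clean dissipation term $\tfrac12||\sqrt{\varepsilon_{n+1}}\,p_{n+1}-\sqrt{\varepsilon_{n}}\,p_n||^2$, which is precisely the pressure dissipation appearing in the statement, and the remaining two terms telescope.

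Substituting this identity into the summed balance yields the one-step equality
\begin{equation*}
\begin{split}
&\tfrac12\left(||u_{n+1}||^2+\varepsilon_{n+1}||p_{n+1}||^2\right)-\tfrac12\left(||u_n||^2+\varepsilon_{n}||p_n||^2\right)+\tfrac12||u_{n+1}-u_n||^2\\
&\qquad+\tfrac12||\sqrt{\varepsilon_{n+1}}\,p_{n+1}-\sqrt{\varepsilon_{n}}\,p_n||^2+k_{n+1}\nu||\nabla u_{n+1}||^2=k_{n+1}(u_{n+1},f_{n+1}).
\end{split}
\end{equation*}
Summing over $n=0,\dots,N-1$ telescopes the energy $\tfrac12\left(||u_n||^2+\varepsilon_{n}||p_n||^2\right)$, producing the asserted energy equality. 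The stability bound then follows by the Cauchy--Schwarz--Young estimate $k_{n+1}(u_{n+1},f_{n+1})\le\tfrac{k_{n+1}\nu}{2}||\nabla u_{n+1}||^2+\tfrac{k_{n+1}}{2\nu}||f_{n+1}||_{-1}^2$ and absorbing the gradient term into the left-hand viscous dissipation.

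Given that Theorem~2.1 supplies the entire framework, there is no substantive obstacle; the only point deserving care is recognizing that pulling out the factor $\sqrt{\varepsilon_{n+1}}$ turns the mixed product into a bona fide polarization in the \emph{rescaled} pressures $\sqrt{\varepsilon_{n+1}}\,p_{n+1}$ and $\sqrt{\varepsilon_{n}}\,p_n$. It is this rescaling that makes the cross term collapse to a nonnegative square and lets the energy telescope for \emph{arbitrary} variable $\varepsilon_n$, which is exactly the mechanism delivering unconditional stability.
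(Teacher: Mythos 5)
Your proposal is correct and follows essentially the same route as the paper: the only new ingredient over Theorem 2.1 is the identity $(\varepsilon_{n+1}p_{n+1}-\sqrt{\varepsilon_{n+1}\varepsilon_{n}}\,p_{n},p_{n+1})=\tfrac12\varepsilon_{n+1}\|p_{n+1}\|^{2}-\tfrac12\varepsilon_{n}\|p_{n}\|^{2}+\tfrac12\|\sqrt{\varepsilon_{n+1}}\,p_{n+1}-\sqrt{\varepsilon_{n}}\,p_{n}\|^{2}$, which the paper obtains by applying polarization directly to the cross term $(\sqrt{\varepsilon_{n}}\,p_{n},\sqrt{\varepsilon_{n+1}}\,p_{n+1})$ and you obtain by the equivalent device of factoring out $\sqrt{\varepsilon_{n+1}}$ first. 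The remaining steps (testing, skew-symmetry, telescoping, Cauchy--Schwarz--Young) match the paper's argument, which simply defers them to the min-Method proof.
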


\begin{proof}
First we note that using the polarization identity we have 
\begin{gather*}
(\varepsilon _{n+1}p_{n+1}-\sqrt{\varepsilon _{n+1}\varepsilon _{n}}%
p_{n},p_{n+1})= \\
=\varepsilon _{n+1}||p_{n+1}||^{2}-(\sqrt{\varepsilon _{n}}p_{n},\sqrt{%
\varepsilon _{n+1}}p_{n+1}) \\
=\varepsilon _{n+1}||p_{n+1}||^{2}-\left\{ \frac{1}{2}\varepsilon
_{n}||p_{n}||^{2}+\frac{1}{2}\varepsilon _{n+1}||p_{n+1}||^{2}-\frac{1}{2}||%
\sqrt{\varepsilon _{n}}p_{n}-\sqrt{\varepsilon _{n+1}}p_{n+1}||^{2}\right\}
\\
=\frac{1}{2}\varepsilon _{n+1}||p_{n+1}||^{2}-\frac{1}{2}\varepsilon
_{n}||p_{n}||^{2}+\frac{1}{2}||\sqrt{ \varepsilon _{n+1}}p_{n+1}-\sqrt{%
\varepsilon _{n}}p_{n}||^{2}.
\end{gather*}%
The remainder of the proof is the same as for the min-Method.
\end{proof}

The stability analysis shows that the numerical dissipation in the GA-Method
is%
\begin{equation*}
\begin{array}{c}
\text{Numerical} \\ 
\text{Dissipation}%
\end{array}%
\text{ = }\frac{1}{2}k_{n+1}^{2}\int_{\Omega }\left[ \left\vert \frac{%
u_{n+1}-u_{n}}{k_{n+1}}\right\vert ^{2}+\left( \frac{\sqrt{\varepsilon _{n+1}%
}p_{n+1}-\sqrt{\varepsilon _{n}}p_{n}}{k_{n+1}}\right) ^{2}\right] dx.
\end{equation*}%
There is no obvious way to tell \emph{\'{a} priori} which method's numerical
dissipation is larger or to be preferred. A numerical comparison is thus
presented in Section 5.

\begin{remark}
\textbf{The continuum analogs. }It is natural to ask if there is a
non-autonomous continuum AC model associated with each method. The momentum
equation for each continuum model is the standard 
\begin{equation*}
u_{t}+u\cdot \nabla u+{\frac{1}{2}}(\nabla \cdot u)u+\nabla p-\nu \Delta u=f.
\end{equation*}%
The associated continuum continuity equation for the min-Method is 
\begin{equation}
\varepsilon (t)p_{t}+\varepsilon _{t}^{+}p+\nabla \cdot u=0,
\end{equation}%
whereas the continuum continuity equation for the GA-method is 
\begin{equation*}
\sqrt{\varepsilon }(\sqrt{\varepsilon }p)_{t}+\nabla \cdot {u}=0.
\end{equation*}%
Analyzing convergence of each to a weak solution of the incompressible NSE
as (non-autonomous) $\varepsilon (t)\rightarrow 0$\ is a significant open
problem.
\end{remark}

\section{Second-Order, Variable $\protect\varepsilon $ Methods}

The first-order methods are now extended to embedded first and second-order
methods adapting \cite{GL18} from ODEs to the NSE. First we review the idea
of extension used.

\textbf{Review of the ODE algorithm.} Consider the initial value problem%
\begin{equation*}
y^{\prime }(t)=f(t,y(t)),y(0)=y_{0}.
\end{equation*}%
Recall $\tau =k_{n+1}/k_{n}$ is the time-step ratio. The second-order
accurate, variable time-step method of \cite{GL18} is the standard backward
Euler (fully implicit) method followed by a time filter:%
\begin{equation}
\begin{array}{cc}
\text{Step 1} & \frac{y_{n+1}^{1}-y_{n}}{k_{n+1}}=f(t_{n+1},y_{n+1}^{1}), \\ 
& \text{ pick filter parameter }\alpha (1)=\frac{\tau (1+\tau )}{(1+2\tau )}%
\text{, then} \\ 
\text{Step 2} & y_{n+1}=y_{n+1}^{1}-\frac{\alpha_1}{2}\left\{ \frac{2k_{n}%
}{k_{n}+k_{n+1}}y_{n+1}^{1}-2y_{n}+\frac{2k_{n+1}}{k_{n}+k_{n+1}}%
y_{n-1}\right\} .%
\end{array}
\label{eq:VariableMethod}
\end{equation}%
The combination is second-order accurate, $A-$stable for constant or
decreasing time-steps and a measure of the pre- and post-filter difference 
\begin{equation}
EST(1)=|y_{n+1}^{1}-y_{n+1}|
\end{equation}%
can be used in a standard way as a local error estimator for the lower order
approximation $y_{n+1}^{1}$ or a (pessimistic) estimator for the higher
order approximation $y_{n+1}$.

\textbf{A simple, adaptive}$-\varepsilon $\textbf{, second-order AC algorithm%
}. The continuity equation for both methods can be written%
\begin{equation*}
\frac{\varepsilon _{n+1}p_{n+1}-\hat{\varepsilon}p_{n}}{k_{n+1}}+\nabla
\cdot u_{n+1}=0\text{ where }\hat{\varepsilon}=\sqrt{\varepsilon
_{n+1}\varepsilon _{n}}\text{ or }\min \{\varepsilon _{n+1},\varepsilon
_{n}\}\text{.}
\end{equation*}%
This can be used to uncouple velocity and pressure using%
\begin{equation*}
\nabla p_{n+1}=\frac{\hat{\varepsilon}}{\varepsilon _{n+1}}\nabla p_{n}-%
\frac{k_{n+1}}{\varepsilon _{n+1}}\nabla \nabla \cdot u_{n+1}.
\end{equation*}%
The discrete momentum equation for either first-order method is then%
\begin{gather*}
\frac{u_{n+1}^{1}-u_{n}}{k_{n+1}}+u^{\ast }\cdot \nabla u_{n+1}^{1}+{\frac{1%
}{2}}(\nabla \cdot u^{\ast })u_{n+1}^{1}-\frac{k_{n+1}}{\varepsilon _{n+1}}%
\nabla \nabla \cdot u_{n+1}^{1}\\
-\nu \Delta u_{n+1}^{1}=f_{n+1}-\frac{%
\hat{\varepsilon}}{\varepsilon _{n+1}}\nabla p_{n}.
\end{gather*}%
Applying the time filter of (\ref{eq:VariableMethod}) to the velocity
approximation increases the methods accuracy to $\mathcal{O}(k^{2})$. This
combination yields a simple, second-order, constant time-step but adaptive $%
\varepsilon $ algorithm. In the algorithm below the change in $\varepsilon $
is restricted to be between halving and doubling the previous $\varepsilon $%
\ value.

\begin{algorithm}
\textbf{[Simple, adaptive }$\varepsilon $\textbf{, constant time-step,
second-order AC method]. }\texttt{Given} $u_{n},u_{n-1},p_{n},k,\varepsilon
_{n+1},\varepsilon _{n}$, \texttt{and tolerance }$TOL_c$\texttt{, }

\texttt{Select: \ }$\hat{\varepsilon}=\sqrt{\varepsilon
_{n+1}\varepsilon _{n}}$ $\ \mathtt{or}$ $\hat{\varepsilon}=\min
\{\varepsilon _{n+1},\varepsilon _{n}\}$\texttt{\ \ }

\ \texttt{Set: \ }$u^{\ast }=2u_{n}-u_{n-1}.$\texttt{\ \ }

\texttt{Solve for} $u_{n+1}^{1}$%
\begin{gather*}
\frac{u_{n+1}^{1}-u_{n}}{k}+u^{\ast }\cdot \nabla u_{n+1}^{1}+{\frac{1}{2}}%
(\nabla \cdot u^{\ast })u_{n+1}^{1}-\frac{k}{\varepsilon _{n+1}}\nabla
\nabla \cdot u_{n+1}^{1}\\
-\nu \Delta u_{n+1}^{1}=f_{n+1}-\frac{\hat{%
\varepsilon}}{\varepsilon _{n+1}}\nabla p_{n}.
\end{gather*}

\texttt{Filter, Compute estimator} $EST_{c}$ , \texttt{Find} $p_{n+1}$ 
\begin{eqnarray*}
u_{n+1} &=&u_{n+1}^{1}-\frac{1}{3}\left\{ u_{n+1}^{1}-2u_{n}+u_{n-1}\right\}
, \\
EST_{c} &=&||\nabla \cdot u_{n+1}||=\frac{1}{3}\left\Vert
u_{n+1}^{1}-2u_{n}+u_{n-1}\right\Vert , \\
p_{n+1} &=&\frac{\hat{\varepsilon}}{\varepsilon _{n+1}}p_{n}-\frac{%
k_{n+1}}{\varepsilon _{n+1}}\nabla \cdot u_{n+1}.
\end{eqnarray*}

\texttt{Adapt} $\varepsilon :$ \ \texttt{IF} $EST_{c}>TOL_c$ , \texttt{THEN
repeat step after resetting} $\varepsilon _{n+1}$\ \texttt{by} 
\begin{equation*}
\varepsilon _{n+1}=\max \{0.9\varepsilon _{n+1}\frac{TOL_c}{EST_{c}}%
,0.5\varepsilon _{n+1}\}
\end{equation*}

\texttt{ELSE} \ 
\begin{equation*}
\varepsilon _{n+2}=\max \{\min \{0.9\varepsilon _{n+1}\frac{TOL_c}{EST_{c}}%
,2\varepsilon _{n+1}\},.5\varepsilon _{n+1}\}
\end{equation*}

\texttt{and proceed to next step.}
\end{algorithm}

\subsection{Stability of the second-order method for variable $\protect%
\varepsilon $, constant $k$}

This section establishes unconditional, nonlinear, long-time stability of
the second-order GA-method for constant time-steps but variable $\varepsilon 
$. The proof addresses the interaction between the filter step with the
continuity equation. It is adapted to the min-Method following ideas in the
proof of Theorem 2.1. For constant time-steps and variable $\varepsilon $
the GA-method is as follows. Given $u_{n},p_{n},\varepsilon _{n}$, select $%
\varepsilon _{n+1}$ and $u^{\ast }=$ $2u_{n}-u_{n-1}$ (since the time-step
is here constant). Then, 
\begin{gather}
\frac{u_{n+1}^{1}-u_{n}}{k}+u^{\ast }\cdot \nabla u_{n+1}^{1}+{\frac{1}{2}}%
(\nabla \cdot u^{\ast })u_{n+1}^{1}+\nabla p_{n+1}-\nu \Delta
u_{n+1}^{1}=f_{n+1},  \notag \\
\text{Filter: }u_{n+1}=u_{n+1}^{1}-\frac{1}{3}\left\{
u_{n+1}^{1}-2u_{n}+u_{n-1}\right\}  \label{eq:SecOrdConstTimeStepVarEpsilon}
\\
\text{Find }p_{n+1}\text{ :\ }\frac{\varepsilon _{n+1}p_{n+1}-\sqrt{%
\varepsilon _{n+1}\varepsilon _{n}}p_{n}}{k}+\nabla \cdot u_{n+1}^{1}=0\text{
\& proceed to next step.}  \notag
\end{gather}%
We now prove an energy equality for the method which implies stability.

\begin{theorem}
The method (\ref{eq:SecOrdConstTimeStepVarEpsilon}) satisfies the following
discrete energy equality (from which stability follows). For any $N>1$%
\begin{gather*}
\left[ \frac{1}{4}\int_{\Omega
}|u_{N+1}|^{2}+|2u_{N+1}-u_{N}|^{2}+|u_{N+1}-u_{N}|^{2}+2\varepsilon
_{N+1}|p_{N+1}|^{2}dx\right] \\
+\sum_{n=1}^{N}\int_{\Omega }\frac{3}{4}|u_{n+1}-2u_{n}+u_{n-1}|^{2}+\frac{1%
}{2}|\sqrt{\varepsilon _{n+1}}p_{n+1}-\sqrt{\varepsilon _{n}}p_{n}|^{2}dx+ \\
+\sum_{n=1}^{N}k\int_{\Omega }\nu |{\normalsize \nabla }\left[ \frac{3}{2}%
u_{n+1}-u_{n}+\frac{1}{2}u_{n-1}\right] |^{2}dx+ \\
=\left[ \frac{1}{4}\int_{\Omega
}|u_{1}|^{2}+|2u_{1}-u_{0}|^{2}+|u_{1}-u_{0}|^{2}+2\varepsilon
_{1}|p_{1}|^{2}\right] \\
+k\sum_{n=1}^{N}\int_{\Omega }f_{n+1}\cdot \left( \frac{3}{2}u_{n+1}-u_{n}+%
\frac{1}{2}u_{n-1}\right) dx.
\end{gather*}
\end{theorem}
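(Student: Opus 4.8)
The plan is to run the standard artificial-compression energy argument, testing the discrete momentum equation against the velocity combination that already appears in its spatial terms, and to treat the continuity equation exactly as in the first-order GA-Method. The single genuinely new ingredient is a G-stability identity that rewrites the filtered (hence effectively two-step) time-difference term as a telescoping energy plus a nonnegative remainder; this is the crux and the reason the result is an equality rather than an inequality.

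First I would eliminate the filter by solving the filter step for $u_{n+1}^{1}$. Since the time-step is constant, $\alpha_{1}/2 = 1/3$, and $u_{n+1} = u_{n+1}^{1} - \frac{1}{3}(u_{n+1}^{1} - 2u_{n} + u_{n-1})$ inverts to $u_{n+1}^{1} = \frac{3}{2}u_{n+1} - u_{n} + \frac{1}{2}u_{n-1}$. This is precisely the combination appearing inside the viscous dissipation and forcing terms of the claimed equality, so the natural test function is $k\,u_{n+1}^{1}$. Taking the $L^{2}$ inner product of the momentum equation with $k\,u_{n+1}^{1}$, the Baker-type nonlinear terms cancel by skew symmetry, the viscous term gives $k\nu\|\nabla u_{n+1}^{1}\|^{2} = k\nu\|\nabla(\frac{3}{2}u_{n+1} - u_{n} + \frac{1}{2}u_{n-1})\|^{2}$, and integration by parts turns the pressure term into $-k(p_{n+1}, \nabla\cdot u_{n+1}^{1})$. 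Because the discrete continuity equation is posed in terms of the same $u_{n+1}^{1}$, substituting $k\,\nabla\cdot u_{n+1}^{1} = -(\varepsilon_{n+1}p_{n+1} - \sqrt{\varepsilon_{n+1}\varepsilon_{n}}\,p_{n})$ and applying the GA polarization identity from the first-order GA-Method proof reproduces $\frac{1}{2}\varepsilon_{n+1}\|p_{n+1}\|^{2} - \frac{1}{2}\varepsilon_{n}\|p_{n}\|^{2} + \frac{1}{2}\|\sqrt{\varepsilon_{n+1}}p_{n+1} - \sqrt{\varepsilon_{n}}p_{n}\|^{2}$, matching the pressure energy and its dissipation term exactly.

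The hard part is the remaining discrete time-derivative term $(u_{n+1}^{1} - u_{n}, u_{n+1}^{1}) = (\frac{3}{2}u_{n+1} - 2u_{n} + \frac{1}{2}u_{n-1},\, \frac{3}{2}u_{n+1} - u_{n} + \frac{1}{2}u_{n-1})$. Writing $E_{n} := \|u_{n}\|^{2} + \|2u_{n} - u_{n-1}\|^{2} + \|u_{n} - u_{n-1}\|^{2}$, I would establish the G-stability identity
\[
(u_{n+1}^{1} - u_{n}, u_{n+1}^{1}) = \frac{1}{4}\,(E_{n+1} - E_{n}) + \frac{3}{4}\,\|u_{n+1} - 2u_{n} + u_{n-1}\|^{2}.
\]
This is the one step that is not routine: it is the discrete statement that the filtered backward Euler scheme is A-stable at constant step size, and it is exactly what produces the second-divided-difference remainder $\frac{3}{4}\|u_{n+1} - 2u_{n} + u_{n-1}\|^{2}$ and forces an energy equality. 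I would verify it by expanding both sides in $(u_{n+1}, u_{n}, u_{n-1})$ and matching all six coefficients; its conceptual content is the existence of the positive-definite matrix $G$ associated with the characteristic triples $(\frac{3}{2}, -2, \frac{1}{2})$ and $(\frac{3}{2}, -1, \frac{1}{2})$ of the one-leg twin.

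Finally, I would collect the per-step identity and sum it over $n = 1, \dots, N$ (so $N>1$ is required, since two starting values $u_{0}, u_{1}$ are needed). The $E_{n}$ and $\varepsilon_{n}\|p_{n}\|^{2}$ contributions telescope; the factor $\frac{1}{4}$ on $E_{N+1}$ together with $\frac{1}{2}\varepsilon_{N+1}\|p_{N+1}\|^{2} = \frac{1}{4}\cdot 2\varepsilon_{N+1}\|p_{N+1}\|^{2}$ reproduces the stated endpoint energy, and the forcing sum becomes $k\sum_{n=1}^{N}(f_{n+1}, \frac{3}{2}u_{n+1} - u_{n} + \frac{1}{2}u_{n-1}) = k\sum_{n=1}^{N}(f_{n+1}, u_{n+1}^{1})$, yielding the energy equality verbatim. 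The stability bound then follows by Cauchy--Schwarz--Young, $k(f_{n+1}, u_{n+1}^{1}) \le \frac{k}{2\nu}\|f_{n+1}\|_{-1}^{2} + \frac{k\nu}{2}\|\nabla u_{n+1}^{1}\|^{2}$, absorbing the gradient term into the viscous dissipation exactly as in the first-order theorems.
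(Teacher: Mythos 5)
Your proposal is correct and follows essentially the same route as the paper: eliminate $u_{n+1}^{1}=\tfrac{3}{2}u_{n+1}-u_{n}+\tfrac{1}{2}u_{n-1}$, test the momentum equation with $k\,u_{n+1}^{1}$, handle the pressure via the GA polarization identity, and resolve the time-difference term with exactly the G-stability identity the paper quotes from DeCaria--Layton--Zhao (your $\tfrac14(E_{n+1}-E_{n})+\tfrac34\|u_{n+1}-2u_{n}+u_{n-1}\|^{2}$ decomposition is that identity restated). The only cosmetic difference is that you substitute the continuity equation into the pressure term rather than adding the $p_{n+1}$-tested continuity equation, which is equivalent.
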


\begin{proof}
To prove stability, eliminate the intermediate value $u_{n+1}^{1}$\ in the
momentum equation. From the filter step $u_{n+1}=u_{n+1}^{1}-\frac{1}{3}%
\left\{ u_{n+1}^{1}-2u_{n}+u_{n-1}\right\} $\ we have%
\begin{equation*}
u_{n+1}^{1}=\frac{3}{2}u_{n+1}-u_{n}+\frac{1}{2}u_{n-1}.
\end{equation*}%
Replacing $u_{n+1}^{1}$ by $\frac{3}{2}u_{n+1}-u_{n}+\frac{1}{2}u_{n-1}$
yields the equivalent discrete momentum equation:%
\begin{gather}
\frac{\frac{3}{2}u_{n+1}-2u_{n}+\frac{1}{2}u_{n-1}}{k}+  \notag \\
+u_{n}^{\ast }\cdot \nabla \left( \frac{3}{2}u_{n+1}-u_{n}+\frac{1}{2}%
u_{n-1}\right) +{\frac{1}{2}}(\nabla \cdot u_{n}^{\ast })\left( \frac{3}{2}%
u_{n+1}-u_{n}+\frac{1}{2}u_{n-1}\right)  \label{eq:secondOrderMoEqn} \\
+\nabla p_{n+1}-\nu \Delta \left( \frac{3}{2}u_{n+1}-u_{n}+\frac{1}{2}%
u_{n-1}\right) =f_{n+1}.  \notag
\end{gather}%
Multiply by the time-step $k$, take the $L^{2}$ inner product of the
momentum equation (\ref{eq:secondOrderMoEqn}) with $\frac{3}{2}u_{n+1}-u_{n}+%
\frac{1}{2}u_{n-1}$, the $L^{2}$ inner product of the discrete continuity
equation with $p_{n+1}$ and add. Two pressure terms cancel since $%
u_{n+1}^{1}=\frac{3}{2}u_{n+1}-u_{n}+\frac{1}{2}u_{n-1}$and the nonlinear
terms vanish due to skew-symmetry. Thus, we obtain%
\begin{gather*}
\left( \frac{3}{2}u_{n+1}-2u_{n}+\frac{1}{2}u_{n-1},\frac{3}{2}u_{n+1}-u_{n}+%
\frac{1}{2}u_{n-1}\right) + \\
+\left( \varepsilon _{n+1}p_{n+1}-\sqrt{\varepsilon _{n+1}\varepsilon _{n}}%
p_{n},p_{n+1}\right) \\
+\nu k\left\Vert \nabla \left[ \frac{3}{2}u_{n+1}-u_{n}+\frac{1}{2}u_{n-1}%
\right] \right\Vert ^{2}=k\left( f_{n+1},\frac{3}{2}u_{n+1}-u_{n}+\frac{1}{2}%
u_{n-1}\right)
\end{gather*}%
The key terms are the first two. For the first term, apply the following
identity from \cite{DLZ18} with $a=u_{n+1},$ $b=u_{n},$ $c=u_{n-1}$%
\begin{gather*}
\left[ \frac{a^{2}}{4}+\frac{(2a-b)^{2}}{4}+\frac{\left( a-b\right) ^{2}}{4}%
\right] -\left[ \frac{b^{2}}{4}+\frac{(2b-c)^{2}}{4}+\frac{\left( b-c\right)
^{2}}{4}\right] \\
+\frac{3}{4}(a-2b+c)^{2}=(\frac{3}{2}a-2b+\frac{1}{2}c)(\frac{3}{2}a-b+\frac{%
1}{2}c).
\end{gather*}%
This yields%
\begin{gather*}
\left( \frac{3}{2}u_{n+1}-2u_{n}+\frac{1}{2}u_{n-1},\frac{3}{2}u_{n+1}-u_{n}+%
\frac{1}{2}u_{n-1}\right) = \\
\left[ \frac{1}{4}||u_{n+1}||^{2}+\frac{1}{4}||2u_{n+1}-u_{n}||^{2}+\frac{1}{%
4}||u_{n+1}-u_{n}||^{2}\right] \\
-\left[ \frac{1}{4}||u_{n}||^{2}+\frac{1}{4}||2u_{n}-u_{n-1}||^{2}+\frac{1}{4%
}||u_{n}-u_{n-1}||^{2}\right] \\
+\frac{3}{4}||u_{n+1}-2u_{n}+u_{n-1}||^{2}.
\end{gather*}%
For the pressure term $\left( \sqrt{\varepsilon _{n+1}\varepsilon _{n}}%
p_{n},p_{n+1}\right) $ the polarization identity, suitably applied, yields%
\begin{gather*}
\left( \sqrt{\varepsilon _{n+1}\varepsilon _{n}}p_{n},p_{n+1}\right) =\left( 
\sqrt{\varepsilon _{n}}p_{n},\sqrt{\varepsilon _{n+1}}p_{n+1}\right) = \\
=\frac{1}{2}\varepsilon _{n+1}||p_{n+1}||^{2}+\frac{1}{2}\varepsilon
_{n}||p_{n}||^{2}-\frac{1}{2}||\sqrt{\varepsilon _{n+1}}p_{n+1}-\sqrt{%
\varepsilon _{n}}p_{n}||^{2}.
\end{gather*}%
Thus%
\begin{gather*}
\left( \varepsilon _{n+1}p_{n+1}-\sqrt{\varepsilon _{n+1}\varepsilon _{n}}%
p_{n},p_{n+1}\right) = \\
=\frac{1}{2}\varepsilon _{n+1}||p_{n+1}||^{2}-\frac{1}{2}\varepsilon
_{n}||p_{n}||^{2}+\frac{1}{2}||\sqrt{\varepsilon _{n+1}}p_{n+1}-\sqrt{%
\varepsilon _{n}}p_{n}||^{2}.
\end{gather*}%
Combining the pressure and velocity identities, we have%
\begin{gather*}
\left[ \frac{1}{4}||u_{n+1}||^{2}+\frac{1}{4}||2u_{n+1}-u_{n}||^{2}+\frac{1}{%
4}||u_{n+1}-u_{n}||^{2}+\frac{\varepsilon _{n+1}}{2}||p_{n+1}||^{2}\right] \\
-\left[ \frac{1}{4}||u_{n}||^{2}+\frac{1}{4}||2u_{n}-u_{n-1}||^{2}+\frac{1}{4%
}||u_{n}-u_{n-1}||^{2}+\frac{\varepsilon _{n}}{2}||p_{n}||^{2}\right] + \\
+\frac{3}{4}||u_{n+1}-2u_{n}+u_{n-1}||^{2}+\frac{1}{2}||\sqrt{\varepsilon
_{n+1}}p_{n+1}-\sqrt{\varepsilon _{n}}p_{n}||^{2} \\
+\nu k\left\Vert {\normalsize \nabla }\left[ \frac{3}{2}u_{n+1}-u_{n}+\frac{1%
}{2}u_{n-1}\right] \right\Vert ^{2}=k\left( f_{n+1},\frac{3}{2}u_{n+1}-u_{n}+%
\frac{1}{2}u_{n-1}\right) .
\end{gather*}%
Summing from $n=1$ to $N$ proves unconditional, long-time stability.
\end{proof}

\section{Doubly $k,\protect\varepsilon $ Adaptive Algorithms}

We present three doubly adaptive AC algorithms: \textit{first-order}, 
\textit{second-order method} and a third that \textit{adapts the method order%
}. The first two are tested in Section 5. While not tested herein, we
include the variable order adaptive algorithm for its clear interest. In the
first algorithm, the error is estimated by a time filter and the next
time-step and next $\varepsilon $ are adapted\footnote{%
The formula for $\varepsilon _{new}$ could be improvable.} based on 
\begin{equation*}
\text{first-order prediction}\text{: }k_{new}=k_{old}\left( \frac{TOL_m}{EST(1)%
}\right) ^{1/2}\text{ and \ }\varepsilon _{new}=\varepsilon _{old}\frac{TOL_c}{%
||\nabla \cdot u_{n+1}||}.
\end{equation*}%
In our implementation, a safety factor of $0.9$ is used and the maximum
change in both is (additionally) restricted to be between $0.5$ \& $2.0$.

\begin{algorithm}[Doubly $k$, $\protect\varepsilon $ Adaptive, First-Order
Method]
\texttt{Given }$TOL_m$\texttt{,}\newline
$TOL_{c}$, $u_{n}$, $u_{n-1}$, $u_{n-2}$ \texttt{and }$k_{n+1}$, $k_{n}$, $%
k_{n-1}$\texttt{\ }

\texttt{Compute: }$\tau =\frac{k_{n+1}}{k_{n}}$\texttt{\ and} \ $\alpha_1=%
\frac{{\LARGE \tau (1.0+\tau )}}{{\LARGE 1.0+2.0\tau }}$

\texttt{Select: \ }$\hat{\varepsilon}=\sqrt{\varepsilon
_{n+1}\varepsilon _{n}}$ $\mathtt{or}$ $\hat{\varepsilon}=\min
\{\varepsilon _{n+1},\varepsilon _{n}\}.$\texttt{\ \ }\ 

\texttt{Set \ \ }$u^{\ast }=\left( 1+\tau \right) u_{n}-\tau u_{n-1}.$

\texttt{Find BE approximation} $u_{n+1}$%
\begin{equation*}
\frac{u_{n+1}-u_{n}}{k_{n+1}}+u^{\ast }\cdot \nabla u_{n+1}+{\frac{1}{2}}%
(\nabla \cdot u^{\ast })u_{n+1}-\frac{k_{n+1}}{\varepsilon _{n+1}}\nabla
\nabla \cdot u_{n+1}-\nu \Delta u_{n+1}=f_{n+1}-\frac{\hat{\varepsilon}%
}{\varepsilon _{n+1}}\nabla p_{n}.
\end{equation*}

\texttt{Compute difference }$D_{2}$\texttt{\ and Estimators}%
\begin{gather*}
D_{2}=\frac{2k_{n}}{k_{n}+k_{n+1}}u_{n+1}^{1}-2u_{n}+\frac{2k_{n+1}}{%
k_{n}+k_{n+1}}u_{n-1} \\
EST(1)=\frac{\alpha_1}{2}||D_{2}||, \\
ESTc=||\nabla \cdot u_{n+1}||.
\end{gather*}

\texttt{IF} $EST_{c}>TOL_{c}$ \ \texttt{or} $EST(1)>TOL_{m}$ \texttt{THEN
repeat step after resetting} $\varepsilon _{n+1},k_{n+1}$\ \texttt{by} 
\begin{eqnarray*}
\varepsilon _{n+1} &=&\max \{0.9\varepsilon _{n+1}\frac{TOL_c}{EST_{c}}%
,0.5\varepsilon _{n+1}\} \\
k_{n+1} &=&0.9\ast \left( \frac{TOL_m}{EST(1)}\right) ^{1/2}\ \ \ \max \left\{
0.9k_{n}\left( \frac{TOL_{m}}{EST(1)}\right) ^{1/2},0.5k_{n+1}\right\}
\end{eqnarray*}

\texttt{ELSE Predict best next step for each approximation:}%
\begin{eqnarray*}
k_{n+2} &=&\max \left\{ \min \left\{ 0.9k_{n+1}\left( \frac{TOL_{m}}{EST(1)}%
\right) ^{1/2},2k_{n+1}\right\} ,0.5k_{n+1}\right\} \\
\varepsilon _{n+2} &=&\max \{\min \{0.9\varepsilon _{n+1}\frac{TOL_c}{EST_{c}}%
,2\varepsilon _{n+1}\},0.5\varepsilon _{n+1}\}
\end{eqnarray*}%
\ 

\ \ \texttt{ENDIF}

\texttt{Update pressure: \ \ }$p_{n+1}=\frac{\hat{\varepsilon}}{%
\varepsilon _{n+1}}p_{n}-\frac{k_{n+1}}{\varepsilon _{n+1}}\nabla \cdot
u_{n+1}.$

\texttt{Proceed to next step.}
\end{algorithm}

\textbf{The second-order, doubly adaptive algorithm.} For the second-order,
doubly adaptive method, we predict the next $\varepsilon $ value the same as
in the first-order method and predict the next time step based on 
\begin{equation*}
\text{second-order\ prediction}\text{: }k_{new}=k_{old}\left( \frac{TOL_m}{%
EST(2)}\right) ^{1/3}.
\end{equation*}%
$EST(2)$ is calculated as follows. The second-order method is equivalent,
after elimination of the intermediate (first-order) approximation, to a one
leg method exactly as in (\ref{eq:SecOrdConstTimeStepVarEpsilon}) in the
constant time-step case. The one leg method's linear multistep twin has
local error proportionate to $k^{3}u_{ttt}+O(k^{4})$. Thus, an estimate of $%
u_{ttt}$\ is computed using difference of $D_{2}$ as follows. Write 
\begin{equation*}
D_{2}(n+1)=\frac{2k_{n}}{k_{n}+k_{n+1}}u_{n+1}^{1}-2u_{n}+\frac{2k_{n+1}}{%
k_{n}+k_{n+1}}u_{n-1}\text{ }
\end{equation*}%
From differences of $D_{2}(n+1)$, $D_{2}(n)$ we obtain the estimator:%
\begin{equation*}
EST(2)=\frac{\alpha_2}{6}\left\Vert \frac{3k_{n-1}}{k_{n+1}+k_{n}+k_{n-1}}%
D_{2}(n+1)-\frac{3k_{n-1}}{k_{n+1}+k_{n}+k_{n-1}}D_{2}(n)\right\Vert ,
\end{equation*}%
where the coefficient $\alpha_2$ is determined through a Taylor series
calculation to be 
\begin{equation*}
\alpha_2=\frac{\tau _{n}(\tau _{n+1}\tau _{n}+\tau _{n}+1)(4\tau
_{n+1}^{3}+5\tau _{n+1}^{2}+\tau _{n+1})}{3(\tau _{n}\tau _{n+1}^{2}+4\tau
_{n}\tau _{n+1}+2\tau _{n+1}+\tau _{n}+1)}
\end{equation*}%
\ 

\begin{algorithm}[Doubly Adaptive, Second-Order Algorithm]
\texttt{Given }$TOL_m$\texttt{, }$TOL_{c}$\texttt{,} $u_{n}$\texttt{,} $u_{n-1}
$\texttt{,} $u_{n-2}$, \texttt{previous 2nd difference} $D_{2}(n)$ \texttt{%
and }$k_{n+1}$\texttt{,} $k_{n}$\texttt{,} $k_{n-1}$\texttt{\ }

\texttt{Compute: }$\tau =\frac{k_{n+1}}{k_{n}},$ $\alpha_1=\frac{{\LARGE %
\tau (1.0+\tau )}}{{\LARGE 1.0+2.0\tau }}$,\ $\alpha_2=\frac{\tau
_{n}(\tau _{n+1}\tau _{n}+\tau _{n}+1)(4\tau _{n+1}^{3}+5\tau
_{n+1}^{2}+\tau _{n+1})}{3(\tau _{n}\tau _{n+1}^{2}+4\tau _{n}\tau
_{n+1}+2\tau _{n+1}+\tau _{n}+1)}$

\texttt{Select: \ \ }$\hat{\varepsilon}=\sqrt{\varepsilon
_{n+1}\varepsilon _{n}}$ $\mathtt{or}$ $\hat{\varepsilon}=\min
\{\varepsilon _{n+1},\varepsilon _{n}\}.$\ 

\texttt{Set: \ \ }$u^{\ast }=\left( 1+\tau \right) u_{n}-\tau u_{n-1}.$

\texttt{Find BE approximation} $u_{n+1}^{1}$%
\begin{equation*}
\frac{u_{n+1}^{1}-u_{n}}{k_{n+1}}+u^{\ast }\cdot \nabla u_{n+1}^{1}+{\frac{1%
}{2}}(\nabla \cdot u^{\ast })u_{n+1}^{1}-\frac{k_{n+1}}{\varepsilon _{n+1}}%
\nabla \nabla \cdot u_{n+1}^{1}-\nu \Delta u_{n+1}^{1}=f_{n+1}-\frac{%
\hat{\varepsilon}}{\varepsilon _{n+1}}\nabla p_{n}.
\end{equation*}

\texttt{Compute difference }$D_{2}$\texttt{\ and update velocity}%
\begin{eqnarray*}
D_{2}(n+1) &=&\frac{2k_{n}}{k_{n}+k_{n+1}}u_{n+1}^{1}-2u_{n}+\frac{2k_{n+1}}{%
k_{n}+k_{n+1}}u_{n-1} \\
u_{n+1} &=&u_{n+1}^{1}-\frac{\alpha_1}{2}D_{2}(n+1)
\end{eqnarray*}

\texttt{Compute estimators}%
\begin{eqnarray*}
EST(2) &=&\frac{\alpha_2}{6}\left\Vert \frac{3k_{n-1}}{%
k_{n+1}+k_{n}+k_{n-1}}D_{2}(n+1)-\frac{3k_{n-1}}{k_{n+1}+k_{n}+k_{n-1}}%
D_{2}(n)\right\Vert , \\
ESTc &=&||\nabla \cdot u_{n+1}||.
\end{eqnarray*}

\texttt{IF} $EST_{c}>TOL_{c}$ \ \texttt{or} $EST(2)>TOL_{m}$ \texttt{THEN
repeat step after resetting} $\varepsilon _{n+1},k_{n+1}$\ \texttt{by} 
\begin{eqnarray*}
\varepsilon _{n+1} &=&\max \{0.9\varepsilon _{n+1}\frac{TOL_c}{EST_{c}}%
,0.5\varepsilon _{n+1}\} \\
k_{n+1} &=&\max \left\{ \min \left\{ 0.9k_{n+1}\left( \frac{TOL_{m}}{EST(2)}%
\right) ^{1/3},2k_{n+1}\right\} ,0.5k_{n+1}\right\}
\end{eqnarray*}

\texttt{ELSE Predict best next step:}%
\begin{eqnarray*}
k_{n+2} &=&\max \left\{ \min \left\{ 0.9k_{n+1}\left( \frac{TOL_{m}}{EST(2)}%
\right) ^{1/3},2k_{n+1}\right\} ,0.5k_{n+1}\right\} \\
\varepsilon _{n+2} &=&\max \{\min \{0.9\varepsilon _{n+1}\frac{TOL_c}{EST_{c}}%
,2\varepsilon _{n+1}\},0.5\varepsilon _{n+1}\}
\end{eqnarray*}%
\ 

\texttt{Update pressure: \ \ }$p_{n+1}=\frac{\hat{\varepsilon}}{%
\varepsilon _{n+1}}p_{n}-\frac{k_{n+1}}{\varepsilon _{n+1}}\nabla \cdot
u_{n+1}.$

\texttt{Proceed to next step.}
\end{algorithm}

\textbf{The adaptive order, time-step and }$\varepsilon $\textbf{\ algorithm.%
} To adapt $\varepsilon ,k$ and the method order we use the local truncation
error indicators for the momentum and continuity equations, respectively, 
\begin{equation*}
\begin{array}{ccc}
\text{Adapt }k\text{ for }u^{1}\text{ using} & : & EST(1) \\ 
\text{Adapt }k\text{ for }u\text{ using} & : & EST(2) \\ 
\text{Adapt }\varepsilon \text{\ for }p\text{\ using} & : & 
EST_{c}:=||\nabla \cdot u_{n+1}||.%
\end{array}%
\end{equation*}%
The algorithm computes two velocity approximations. The first $u^{1}$ is
first-order and $A-$stable for all combinations of time-step and $%
\varepsilon $. The second $u$ is second-order, A-stable for constant (or
decreasing) time-step but only $0-$stable for increasing time-steps.
Variable (1 or 2) order is introduced as follows. The local error in each
approximation is estimated. If both are above the tolerance, the step is
repeated. Otherwise, the optimal next time-step is predicted for each method
by 
\begin{eqnarray*}
\text{first-order prediction}\text{: } &&k_{n+1}=k_{n}\left( \frac{TOL_m}{%
EST(1)}\right) ^{1/2}, \\
\text{second-order\ prediction}\text{: } &&k_{n+1}=k_{n}\left( \frac{TOL_m}{%
EST(2)}\right) ^{1/3}
\end{eqnarray*}%
The actual $k_{n+1}$ presented\ below and in the tests in Section 5 is
restricted to be $(0.5$ to $2.0)\times k_{n}$ and includes a safety factor
of $0.9$.

\begin{algorithm}[Adaptive order, $k$, $\protect\varepsilon $]
\texttt{Given }$TOL_m$\texttt{, }$TOL_{c}$\texttt{, } $u_{n}\mathtt{,\ }u_{n-1}
$, $u_{n-2}$, \texttt{previous second difference} $D_{2}(n)$ \texttt{and }$%
k_{n+1}$, $k_{n}$, $k_{n-1}$\texttt{\ }

\texttt{Compute: }$\tau =\frac{k_{n+1}}{k_{n}},$\ $\alpha_1=\frac{{\LARGE %
\tau (1.0+\tau )}}{{\LARGE 1.0+2.0\tau }},$\ $\alpha_2=\frac{\tau
_{n}(\tau _{n+1}\tau _{n}+\tau _{n}+1)(4\tau _{n+1}^{3}+5\tau
_{n+1}^{2}+\tau _{n+1})}{3(\tau _{n}\tau _{n+1}^{2}+4\tau _{n}\tau
_{n+1}+2\tau _{n+1}+\tau _{n}+1)}$

\texttt{Select: \ \ }$\hat{\varepsilon}=\sqrt{\varepsilon
_{n+1}\varepsilon _{n}}$ $\mathtt{or}$ $\hat{\varepsilon}=\min
\{\varepsilon _{n+1},\varepsilon _{n}\}.$\ 

\texttt{Set: \ \ }$u^{\ast }=\left( 1+\tau \right) u_{n}-\tau u_{n-1}.$

\texttt{Find BE approximation} $u_{n+1}^{1}$%
\begin{equation*}
\frac{u_{n+1}^{1}-u_{n}}{k_{n+1}}+u^{\ast }\cdot \nabla u_{n+1}^{1}+{\frac{1%
}{2}}(\nabla \cdot u^{\ast })u_{n+1}^{1}-\frac{k_{n+1}}{\varepsilon _{n+1}}%
\nabla \nabla \cdot u_{n+1}^{1}-\nu \Delta u_{n+1}^{1}=f_{n+1}-\frac{%
\hat{\varepsilon}}{\varepsilon _{n+1}}\nabla p_{n}.
\end{equation*}

\texttt{Compute difference }$D_{2}$\texttt{\ and updated velocity}%
\begin{eqnarray*}
D_{2}(n+1) &=&\frac{2k_{n}}{k_{n}+k_{n+1}}u_{n+1}^{1}-2u_{n}+\frac{2k_{n+1}}{%
k_{n}+k_{n+1}}u_{n-1} \\
u_{n+1} &=&u_{n+1}^{1}-\frac{\alpha_1}{2}D_{2}(n+1)
\end{eqnarray*}

\texttt{Compute estimators}%
\begin{eqnarray*}
EST(1) &=&\frac{\alpha_1}{2}\left\Vert D_{2}(n+1)\right\Vert , \\
EST(2) &=&\frac{\alpha_2}{6}\left\Vert \frac{3k_{n-1}}{%
k_{n+1}+k_{n}+k_{n-1}}D_{2}(n+1)-\frac{3k_{n-1}}{k_{n+1}+k_{n}+k_{n-1}}%
D_{2}(n)\right\Vert , \\
ESTc &=&||\nabla \cdot u_{n+1}||.
\end{eqnarray*}

\texttt{IF} $EST_{c}>TOL_{c}$ \ \texttt{or} $\min \{EST(1),EST(2)\}>TOL_{m}$ 
\texttt{THEN repeat step, resetting} $\varepsilon _{n+1},k_{n+1}$\ \texttt{by%
} 
\begin{eqnarray*}
\varepsilon _{n+1} &=&\max \{0.9\varepsilon _{n+1}\frac{TOL_c}{EST_{c}}%
,0.5\varepsilon _{n+1}\} \\
STEPBE &=&0.9\ast \left( \frac{TOL_m}{EST(1)}\right) ^{1/2}\ \ \ \max \left\{
0.9k_{n}\left( \frac{TOL_{m}}{EST(1)}\right) ^{1/2},0.5k_{n+1}\right\} \\
STEPFilter &=&0.9\ast \left( \frac{TOL_m}{EST(2)}\right) ^{1/3}\max \left\{
0.9k_{n}\left( \frac{TOL_{m}}{EST(2)}\right) ^{1/3}0.5k_{n+1}\right\} \\
k_{n+1} &=&\max \{STEPBE,STEPFilter\}
\end{eqnarray*}

\texttt{ELSE Predict }$\varepsilon ,k$\texttt{\ for each approximation:}%
\begin{eqnarray*}
STEPBE &=&\max \left\{ \min \left\{ 0.9k_{n+1}\left( \frac{TOL_{m}}{EST(1)}%
\right) ^{1/2},2k_{n+1}\right\} ,0.5k_{n+1}\right\} \\
STEPFilter &=&\max \left\{ \min \left\{ 0.9k_{n+1}\left( \frac{TOL_{m}}{%
EST(2)}\right) ^{1/3},2k_{n+1}\right\} ,0.5k_{n+1}\right\} \\
\varepsilon _{n+2} &=&\max \{\min \{0.9\varepsilon _{n+1}\frac{TOL_c}{EST_{c}}%
,2\varepsilon _{n+1}\},0.5\varepsilon _{n+1}\}
\end{eqnarray*}%
\ 

\texttt{Select method order with larger next step:}

\texttt{IF} ($STEPBE>STEPFilter$) \texttt{Then}

$\ \ \ \ \ \ \ \ u_{n+1}=$ \ $u_{n+1}^{1}$

\ \ \ \ \ \ \ $k_{n+2}=STEPBE$

\ \texttt{\ ELSE}\ \ \ \ \ \ $\ k_{n+2}=STEPFilter$

\ \ \texttt{ENDIF}

\texttt{Update pressure: \ }$p_{n+1}=\frac{\hat{\varepsilon}}{%
\varepsilon _{n+1}}p_{n}-\frac{k_{n+1}}{\varepsilon _{n+1}}\nabla \cdot
u_{n+1}.$

\texttt{Proceed to next step}
\end{algorithm}

The fixed order methods can, if desired, be implemented by commenting out
parts of the variable order Algorithm 4.3.

\section{Three Numerical Tests}

The stability and accuracy of the new methods are interrogated in two
numerical tests and the three discrete continuity equations are compared in
our third test. The tests employ the finite element method to discretize
space, with Taylor-Hood ($\mathbb{P}_{2}/\mathbb{P}_{1}$) elements, \cite%
{G89}. All the stability results proven herein hold for this spatial
discretization by essentially the same proofs. The meshes used for both
tests are generated using a Delaunay triangulation. The software package
FEniCS is used for both experiments \cite{Al15}.

We begin with comparative tests of the adaptive $k,\varepsilon $, first and
second-order method. Both adapt $\varepsilon $\ based on $||\nabla \cdot u||$%
. The first-order method accepts the first-order approximation $u_{n+1}^{1}$
and adapts the time-step based on $EST(1)$. The second-order method accepts $%
u_{n+1}$\ as the approximation and adapts the time step based on $EST(2)$.

\subsection{Test 1: Flow Between Offset Circles}

To interrogate stability and accuracy of the GA-method, we present the
results of two numerical tests. Pick%
\begin{gather*}
\Omega =\{(x,y):x^{2}+y^{2}\leq r_{1}^{2}\text{ and }%
(x-c_{1})^{2}+(y-c_{2})^{2}\geq r_{2}^{2}\}, \\
r_{1}=1,r_{2}=0.1,c=(c_{1},c_{2})=(\frac{1}{2},0), \\
f=\min \{t,1\}(-4y(1-x^{2}-y^{2}),4x(1-x^{2}-y^{2}))^{T},\text{ for }0\leq
t\leq 10.
\end{gather*}%
with no-slip boundary conditions on both circles and $\nu=0.001$. The finite element discretization has a maximal mesh width of $h_{max} = 0.0133,$ and the flow was solved using the direct solver UMFPACK \cite{D06}. For this test, we use fixed tolerances $TOL_m=TOL_c=0.001$. The flow (inspired by the
extensive work on variants of Couette flow, \cite{EP00}), driven by a
counterclockwise force (with $f\equiv 0$ at the outer circle), rotates about 
$(0,0)$ and interacts with the immersed circle. This induces a von K\'{a}rm%
\'{a}n vortex street which re-interacts with the immersed circle creating
more complex structures. There is also a central (polar) vortex that
alternately self-organizes then breaks down. Each of these events includes a
significant pressure response.

For both approximations we track the evolution of $k_{n}$ and $\varepsilon
_{n}$, the pressure at the origin, the violation of incompressibility, and
the algorithmic energy $\Vert u_{h}^{n+1}\Vert ^{2}+\varepsilon _{n+1}\Vert
p_{h}^{n+1}\Vert ^{2}$. These are all depicted in Figure \ref{fig=stab}
below. 
\begin{figure}[t]
	\subfloat[Timestep evolution]{\includegraphics[width=.49\textwidth]{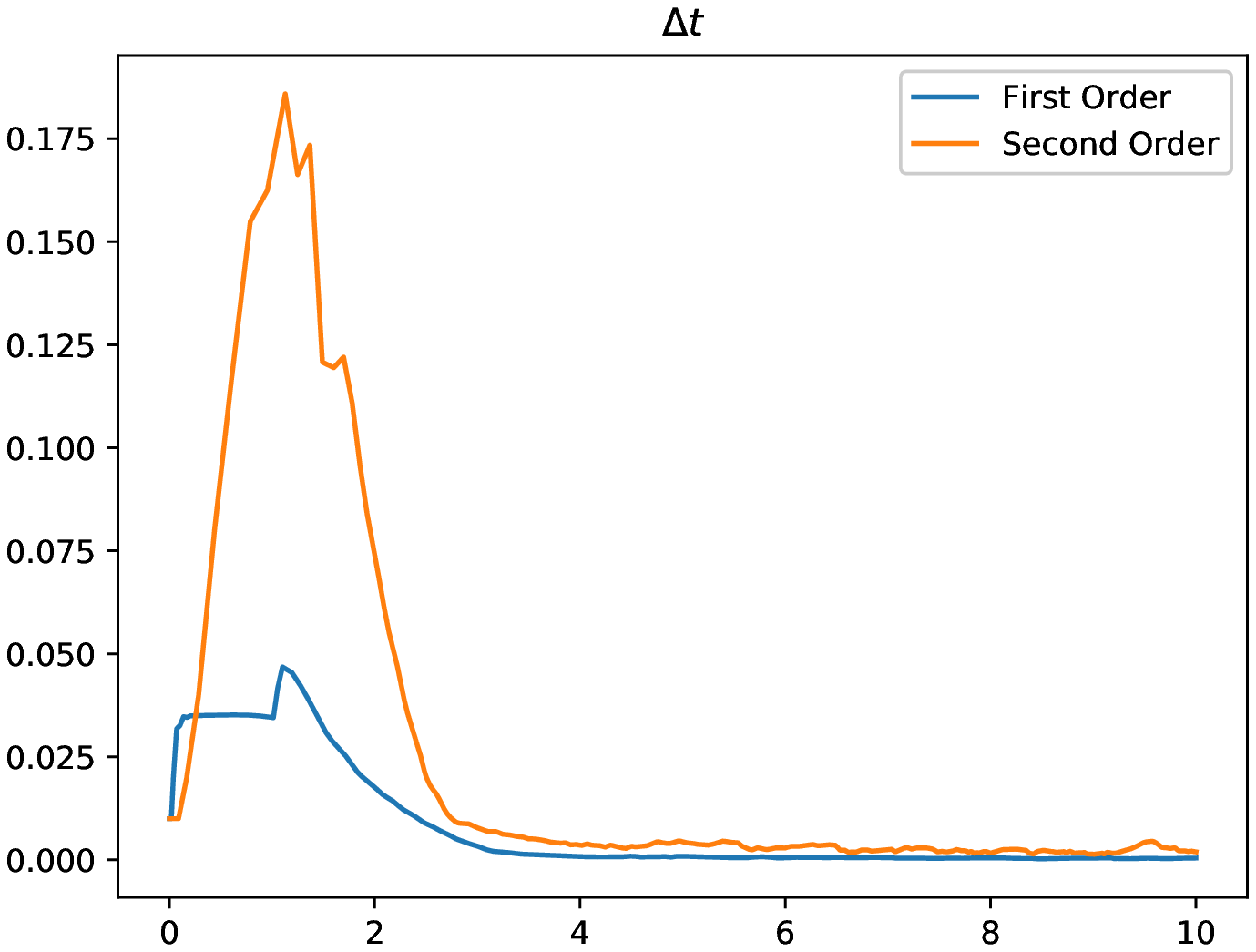}\label{fig1:a}}
    \subfloat[$\varepsilon$ evolution]{\includegraphics[width=.49\textwidth]{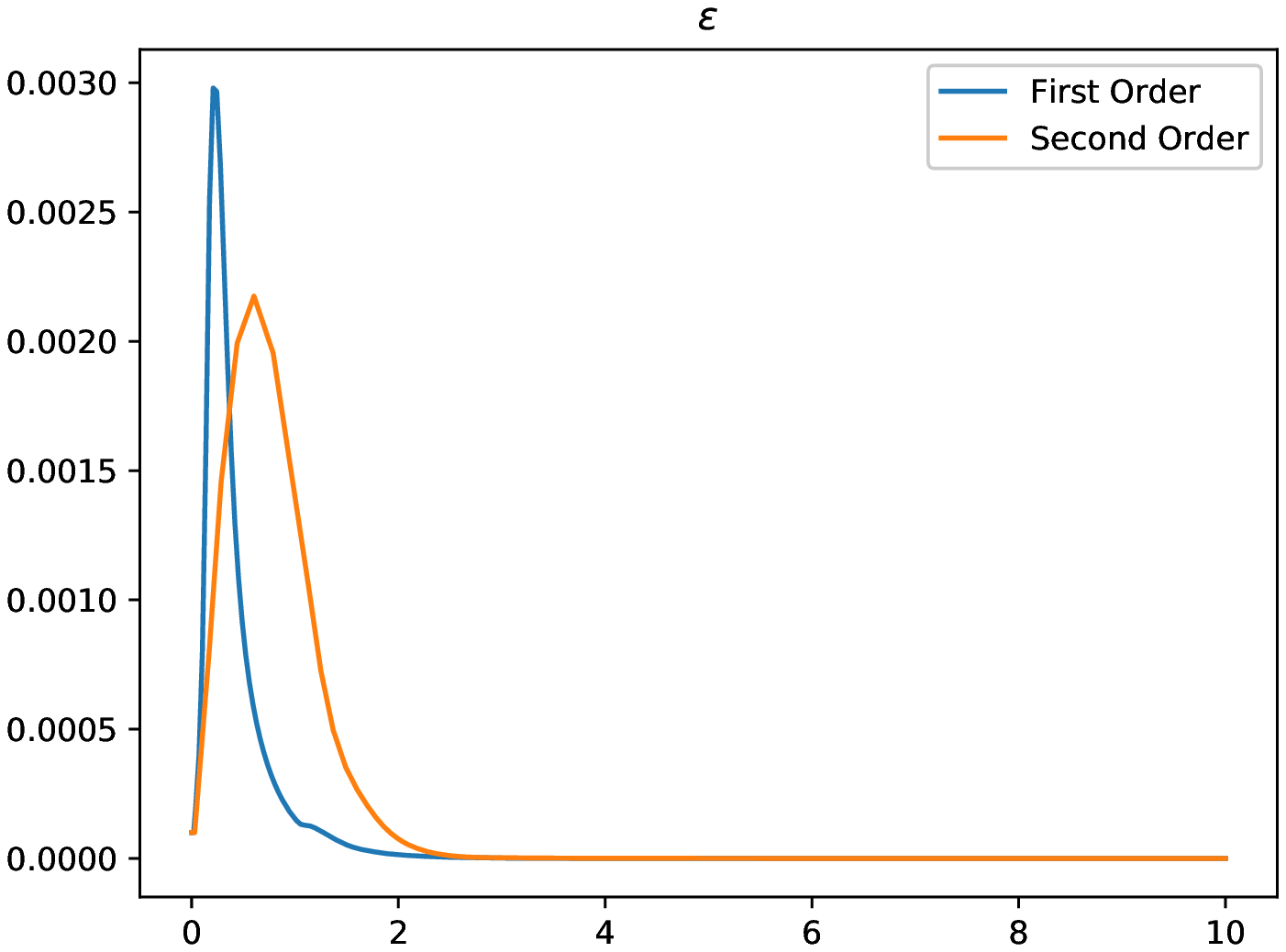}\label{fig1:b}}\\[2mm]
    \subfloat[Pressure at the origin]{\includegraphics[width=.49\textwidth]{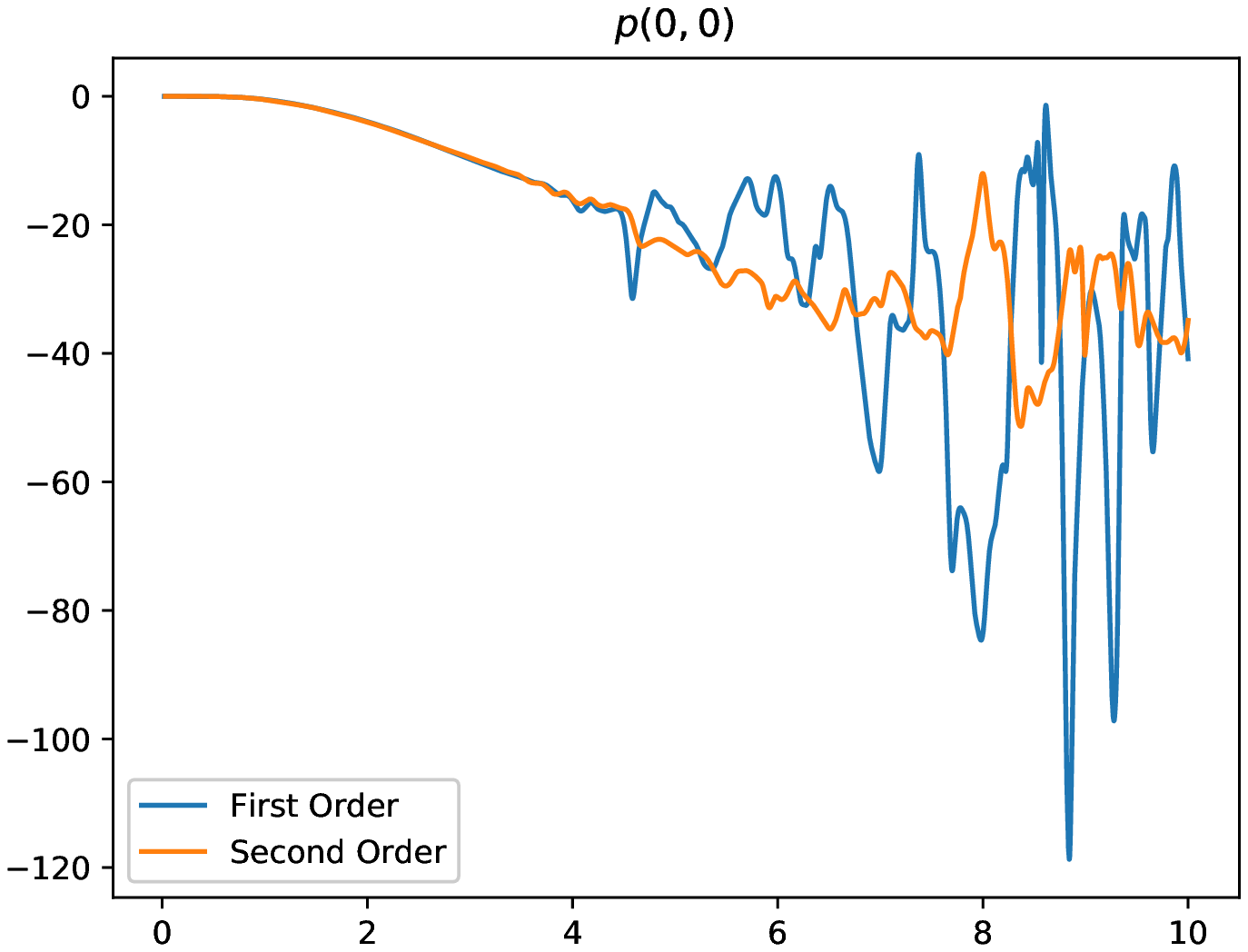}\label{fig1:c}}
    \subfloat[Divergence evolution]{\includegraphics[width=.49\textwidth]{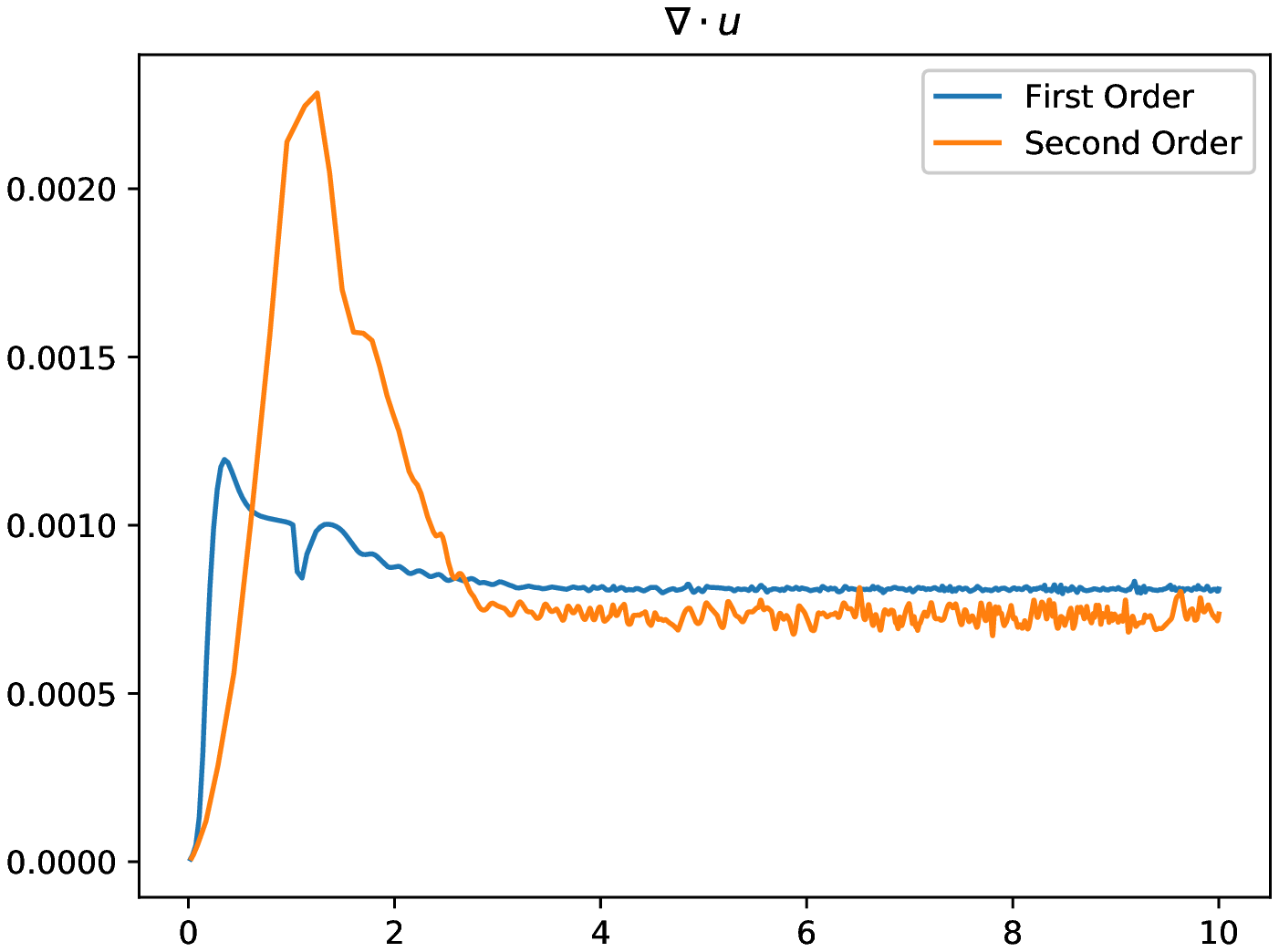}\label{fig1:d}}\\[2mm]
    \subfloat[Energy evolution]{\includegraphics[width=.49\textwidth]{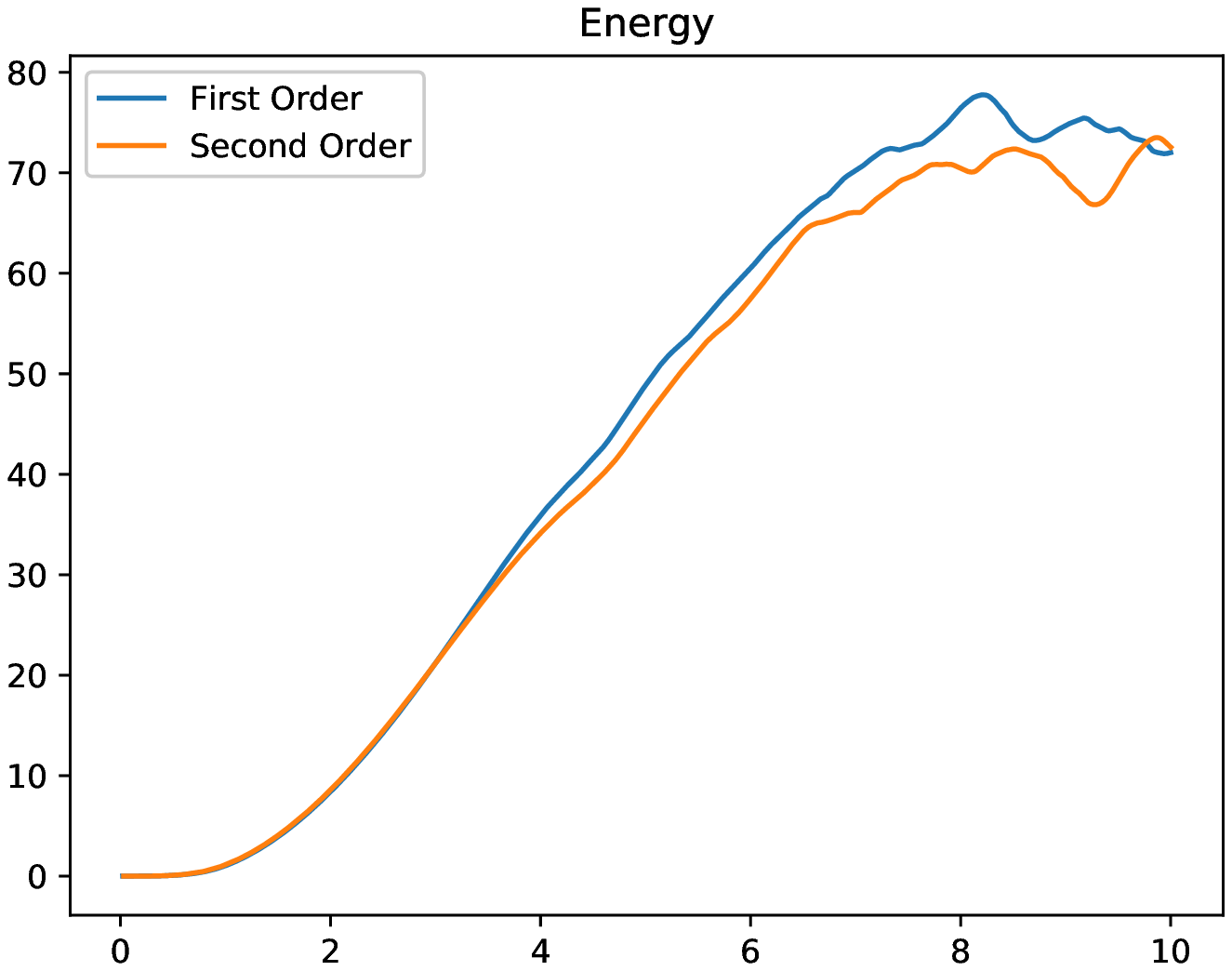}\label{fig1:e}}
	\caption{Stability and adaptability results.}
	\label{fig=stab}
	\end{figure}
Figure \ref{fig1:a} shows that the second-order scheme consistently
chooses larger time-steps than the first-order method. The evolution of $%
\varepsilon $, in Figure \ref{fig1:b}, behaves similarly for both
methods once the flow evolves. In testing AC methods pressure initialization
often causes irregular, transient spiky behavior near $t=0$ such as in
Figures \ref{fig1:a}, \ref{fig1:b}, \ref{fig1:d}.

The behavior of the pressure at the origin, $p(0,0;t)$ vs. $t$, is depicted
in Figure \ref{fig1:c}. To our knowledge, there is no convergence theory for AC
methods (or even fully coupled methods) which implies maximum norm
convergence for the pressure over significant time intervals and for larger
Reynolds numbers. Still, the irregular behavior observed in approximate
solutions, while not conforming to a convergence theory, reflects vortex
events across the whole domain and is interesting to compare. The profiles
of the pressure at the origin are similar for both methods over $0\leq t\leq
4$. For $t>4,$ $p(0,0;t)$ for the second-order scheme is less oscillatory.
This is surprising because the first-order scheme has more numerical
dissipation. The divergence evolution of the schemes also differ in the
initial transient of $||\nabla \cdot u(t)||$. After the initial transient,
the divergence behavior is similar. It is also possible that the difference
in $||\nabla \cdot u||$ transients is due to the strategy of $\varepsilon -$%
adaptation being sub-optimal. The model energy of both methods is largely
comparable. We note that the model energy depends on the choices of $%
\varepsilon $\ made. Thus model energy is not expected to coincide exactly.
Generally, Figures \ref{fig1:d}--\ref{fig1:e} behave similarly for both
algorithms.

\subsection{Test 2: Convergence and Adaptivity}

The second numerical test concerns the accuracy and adaptivity of the
GA-method. Let $\Omega =]0,1[^{2}$, with $\nu =1$. Consider the exact
solution (obtained from \cite{GMS06} and applied to the Navier-Stokes equations)
\begin{gather*}
u=\pi \sin {t}(\sin {2\pi y}\sin ^{2}{\pi x},-\sin {2\pi x}\sin ^{2}{\pi y})
\\
p=\cos {t}\cos {\pi x}\sin {\pi y},
\end{gather*}%
and consider a discretization of $\Omega$ obtained by 300 nodes on each
edge of the square. We proceed by running five experiments, adapting both
the first- and second-order schemes using the algorithms above, where the
tolerance for the continuity and momentum equations is $10^{-(.25i+3)}$ for $i=0,1,2,3,4$. To control the size of the timesteps, we require $k_n$ to be chosen such that $EST(1)\in (TOL_m/10,TOL_m)$. The solutions were obtained in parallel, utilizing the MUMPS direct solver \cite{A01}. To examine convergence, we
present in Figure \ref{fig=conv} log-log plots of the errors of the pressure
and the velocity against the average time-step taken during the test. We
also present semilog plots of the evolution of the pressure error and
timestep during the final test below. The plots show that the time-step adaptation
is working as expected and reducing the velocity error, Figure \ref{fig2:c}. Our intuition is that the pressure error is linked to satisfaction of incompressibility; however, Figure \ref{fig2:d} indicates convergence with respect to the timestep. In our calculations we did observe the
following: If $||\nabla \cdot u||$ is, e.g., two orders of magnitude
smaller then the tolerance, $\varepsilon $ is rapidly increased to be even $%
\mathcal{O}(1)$. At this point the pressure error and violation of
incompressibility spike upward and $\varepsilon $ is then cut rapidly. This
behavior suggests that a band of acceptable $\varepsilon $-values should be
imposed in the adaptive algorithm. 
\begin{figure}[t]
	\subfloat[Timestep and $\varepsilon$ evolution]{\includegraphics[width=.49\textwidth]{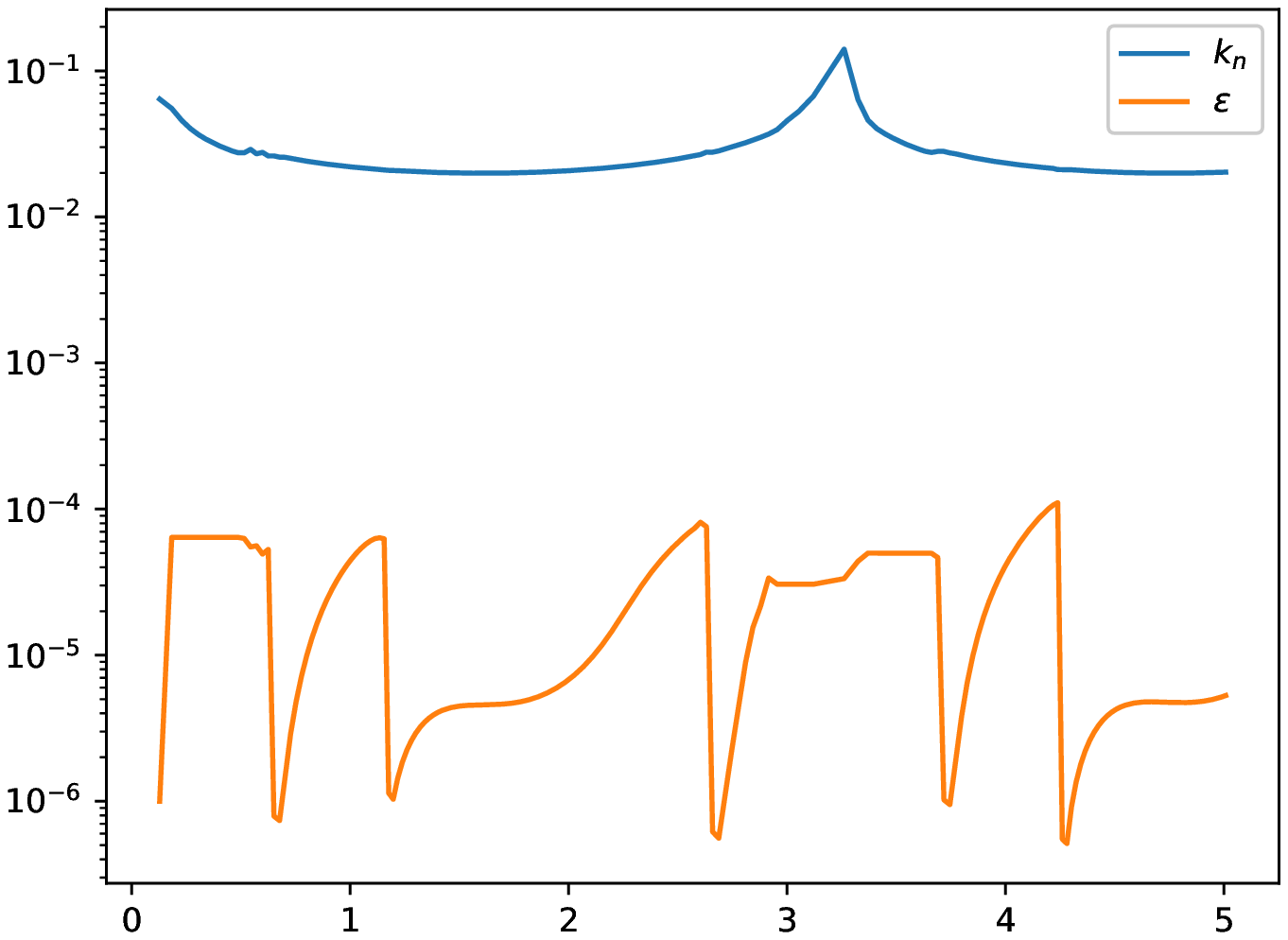}\label{fig2:a}}
    \subfloat[Pressure error evolution]{\includegraphics[width=.49\textwidth]{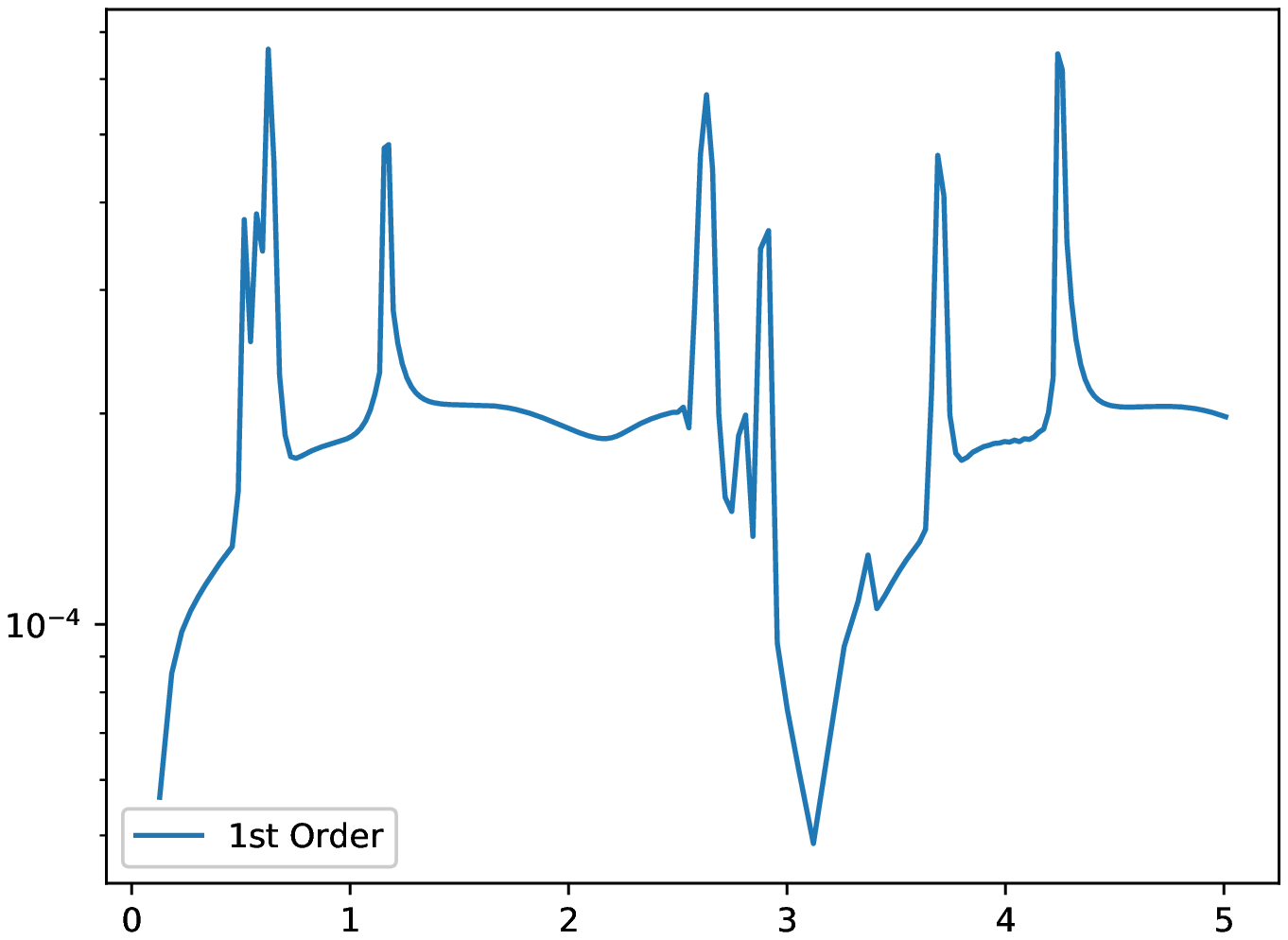}\label{fig2:b}}\\[2mm]
    \subfloat[Average timestep vs. velocity error]{\includegraphics[width=.49\textwidth]{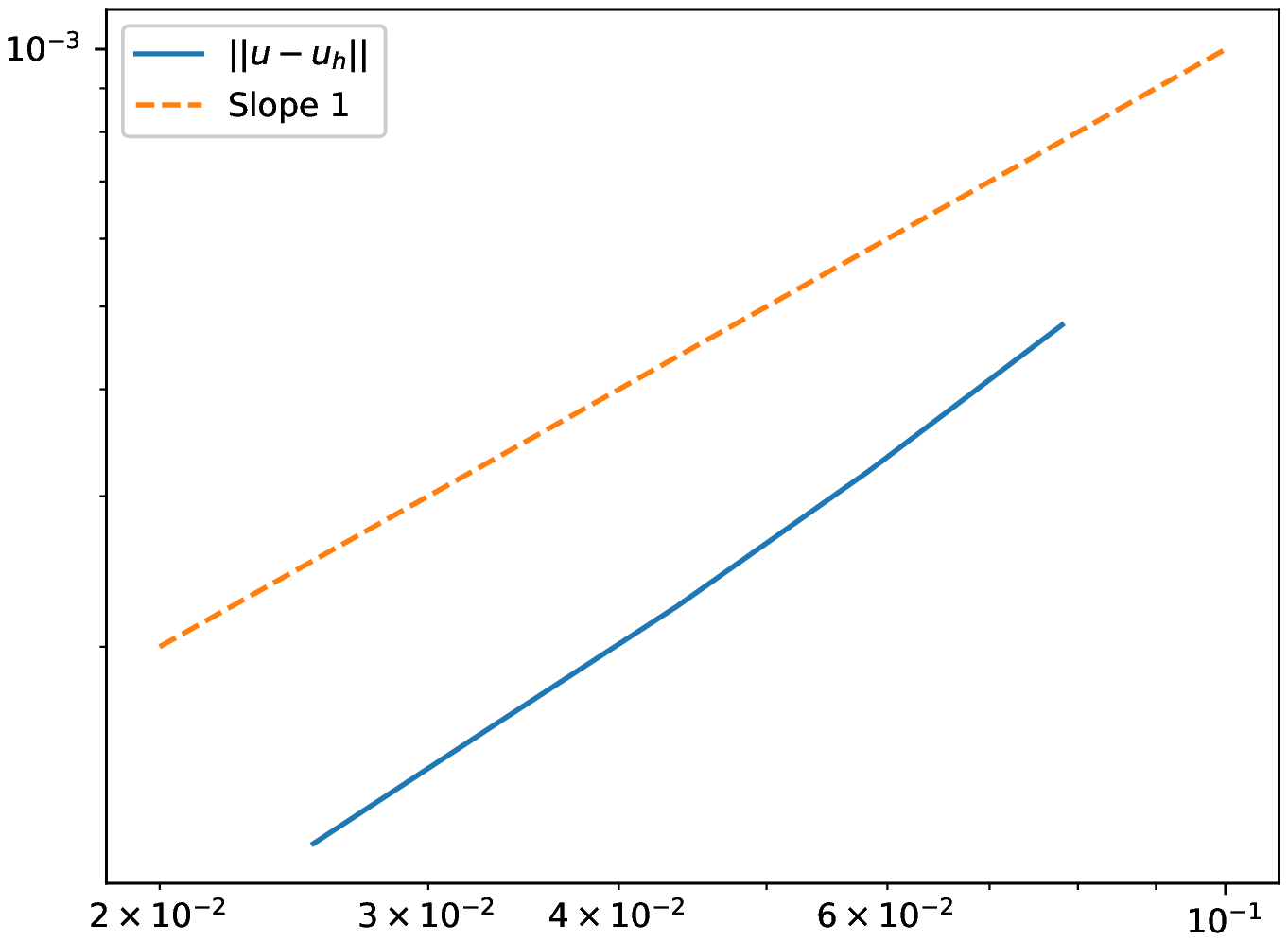}\label{fig2:c}}
    \subfloat[Average timestep vs. pressure error]{\includegraphics[width=.49\textwidth]{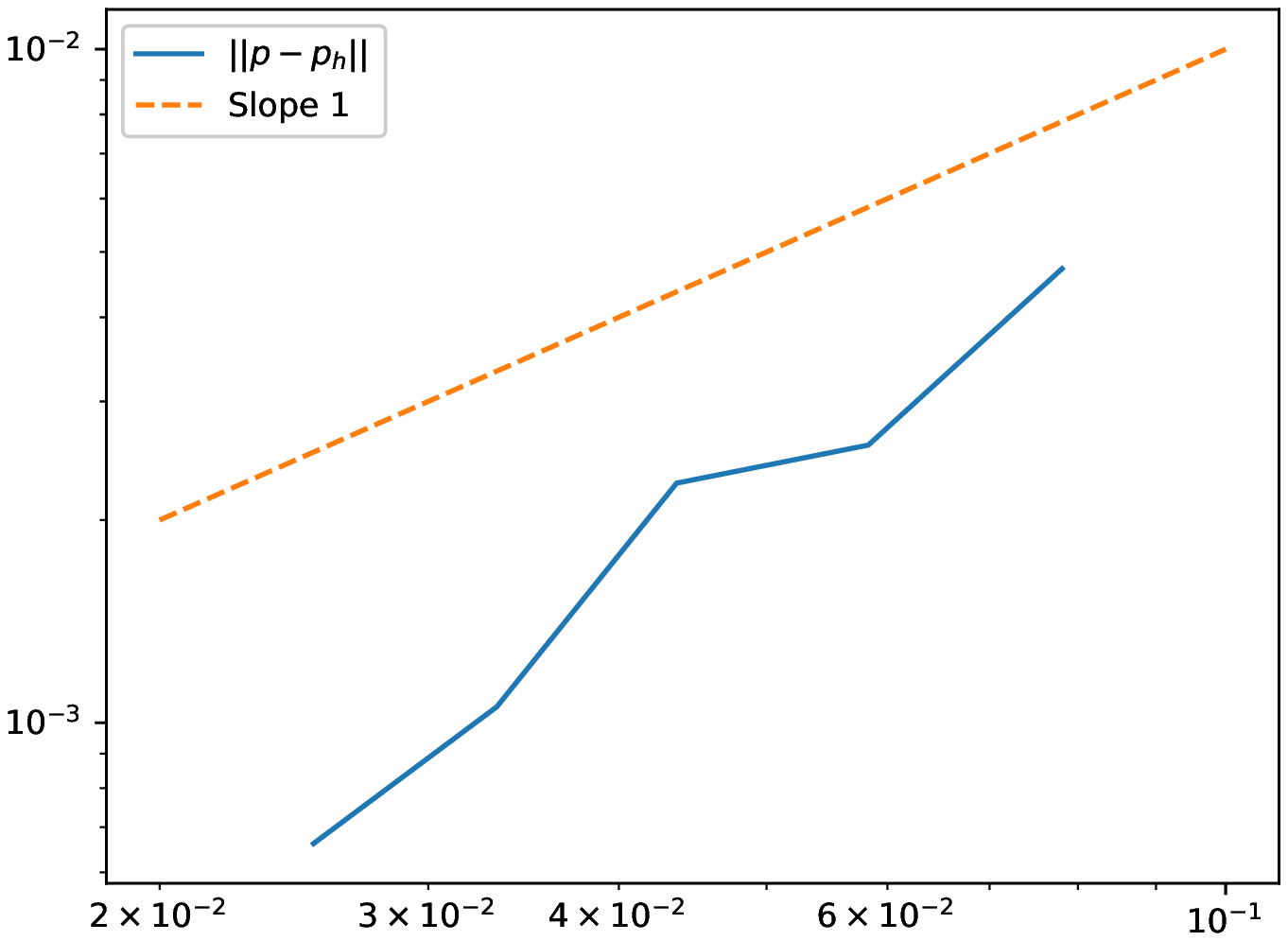}\label{fig2:d}}
	\caption{Accuracy and adaptability results.}
	\label{fig=conv}
	\end{figure}

To compare the GA, Min method and the scheme introduced in \cite{CLM18}, we
use the test problem given above in this section with a known exact
solution. The results are given in Figure \ref{fig=compare} below. Here, we use a
mesh with the same density and final time $T=1$. A timestep $k_{n}=10^{-2}$
is kept constant in this run to highlight differences in the evolution of
the variable $\varepsilon _{n}$, which has an initial value $\varepsilon
_{0}=10^{-4}$. These tests are preliminary: In them, the min-Method seems
preferable in error behavior but yields smaller values and thus less
well-conditioned systems. In the evolution of all four quantities, the GA-
and the CLM \cite{CLM18} method exhibit near identical behavior. The min-Method,
however, forces $\varepsilon $ to be an order of magnitude lower than the
values obtained by the other two schemes. This, in turn, forces the
divergence to be reduced. Furthermore, both the velocity and pressure errors
for the min-Method are smaller than those of the GA- and CLM-Methods.

\begin{figure}[t]
	\subfloat[$\varepsilon$ evolution]{\includegraphics[width=.49\textwidth]{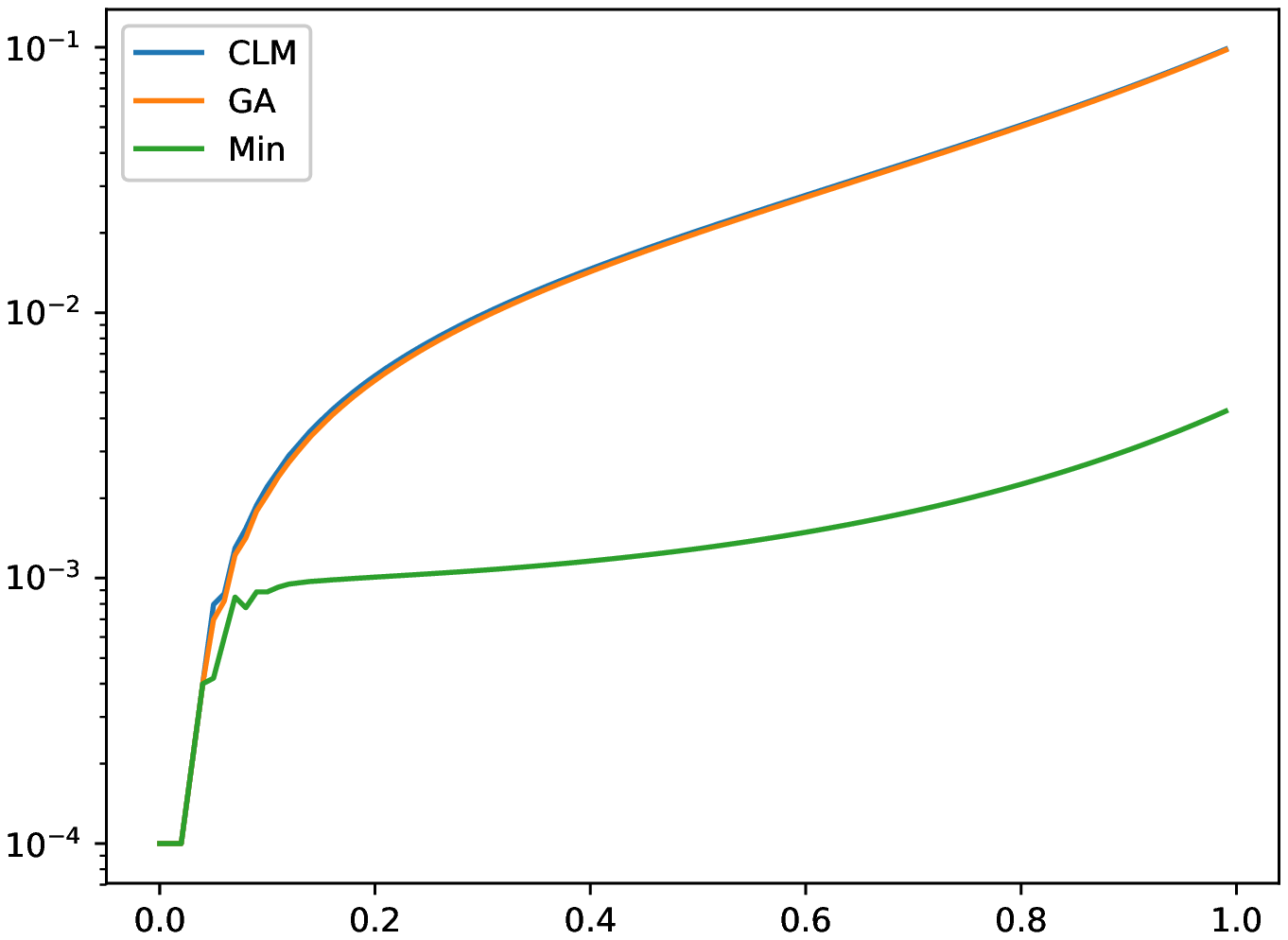}\label{fig3:a}}
    \subfloat[Divergence norm evolution]{\includegraphics[width=.49\textwidth]{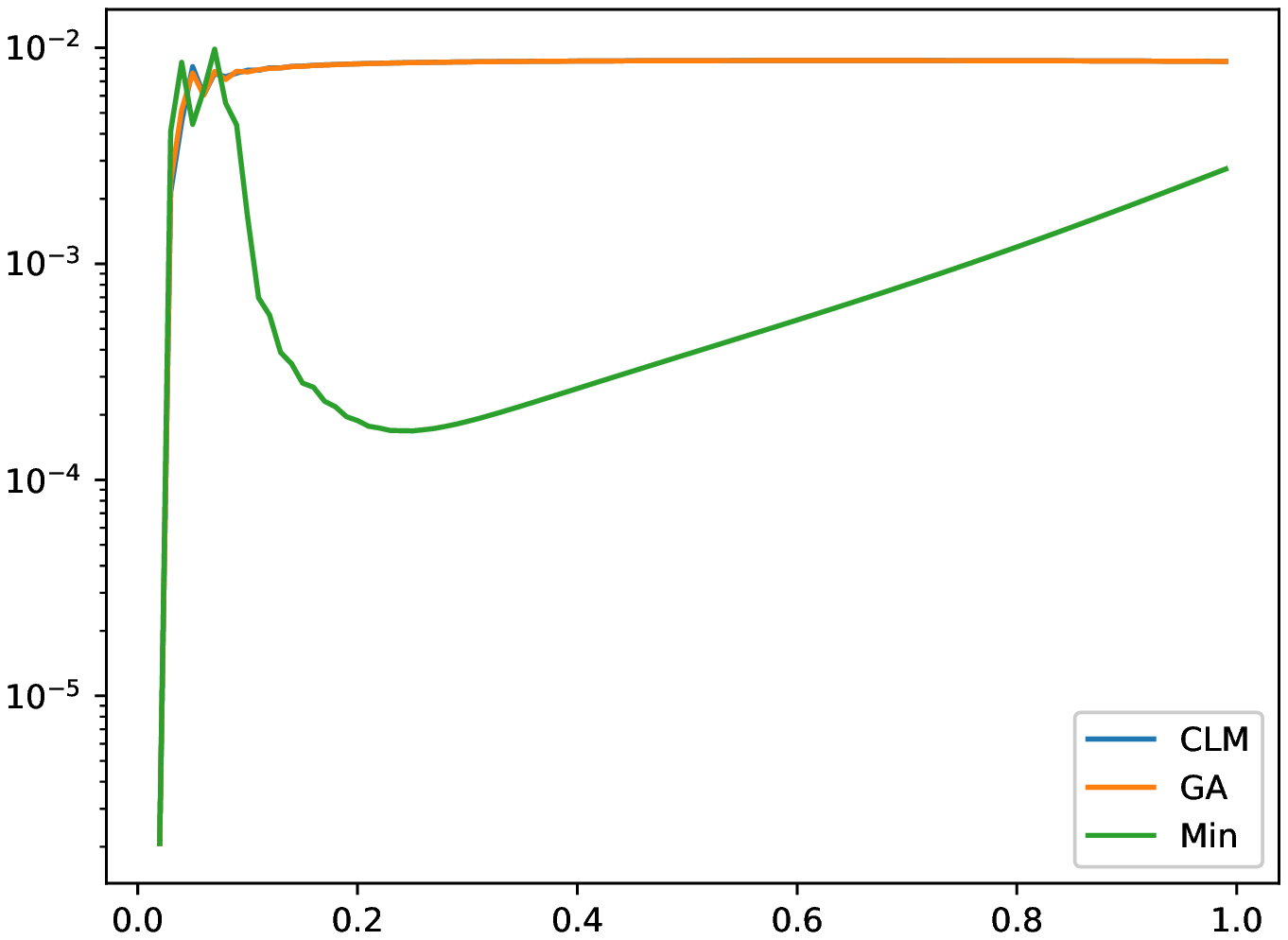}\label{fig3:b}}\\[2mm]
    \subfloat[Velocity error evolution]{\includegraphics[width=.49\textwidth]{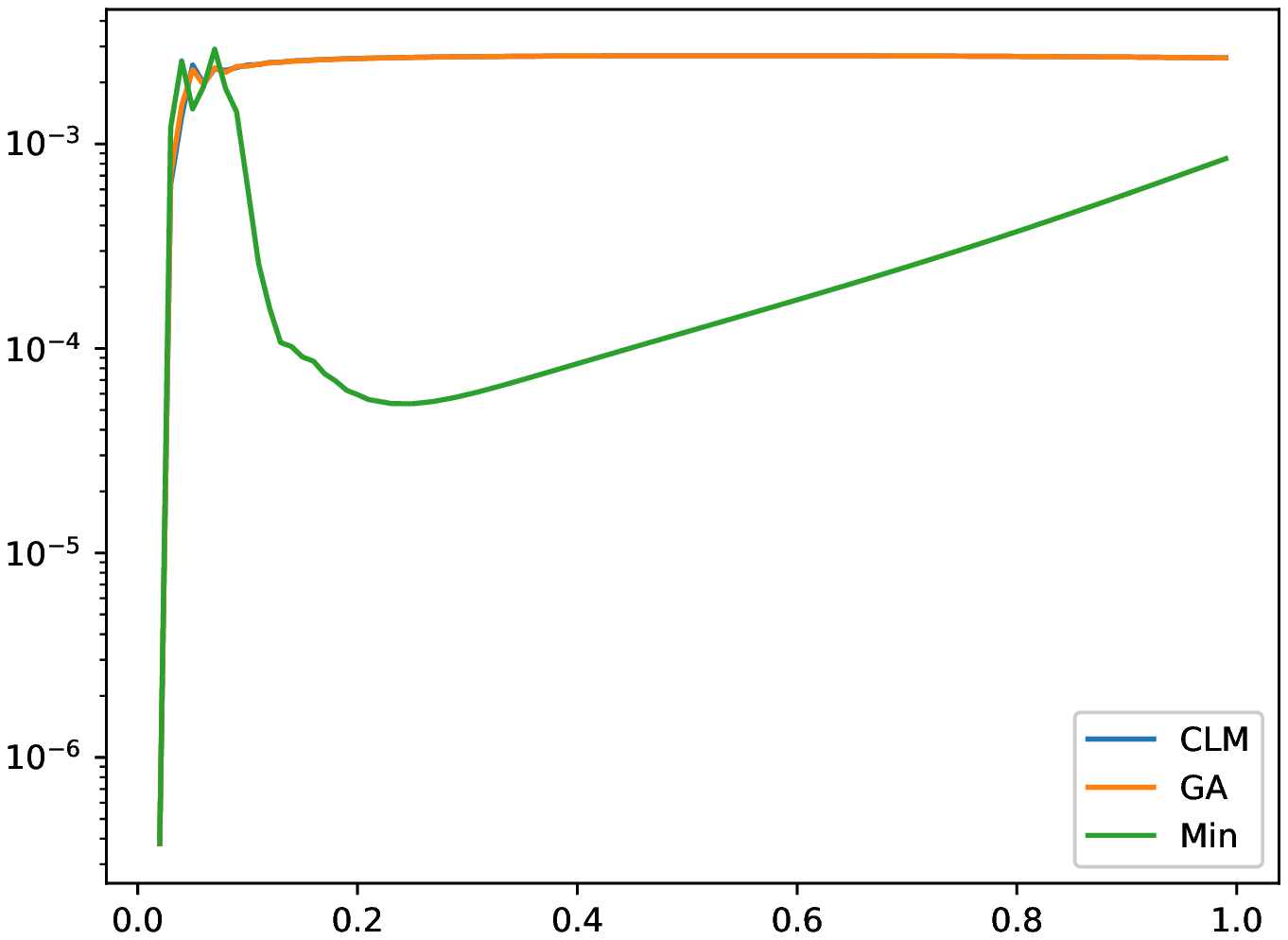}\label{fig3:c}}
    \subfloat[Pressure error evolution]{\includegraphics[width=.49\textwidth]{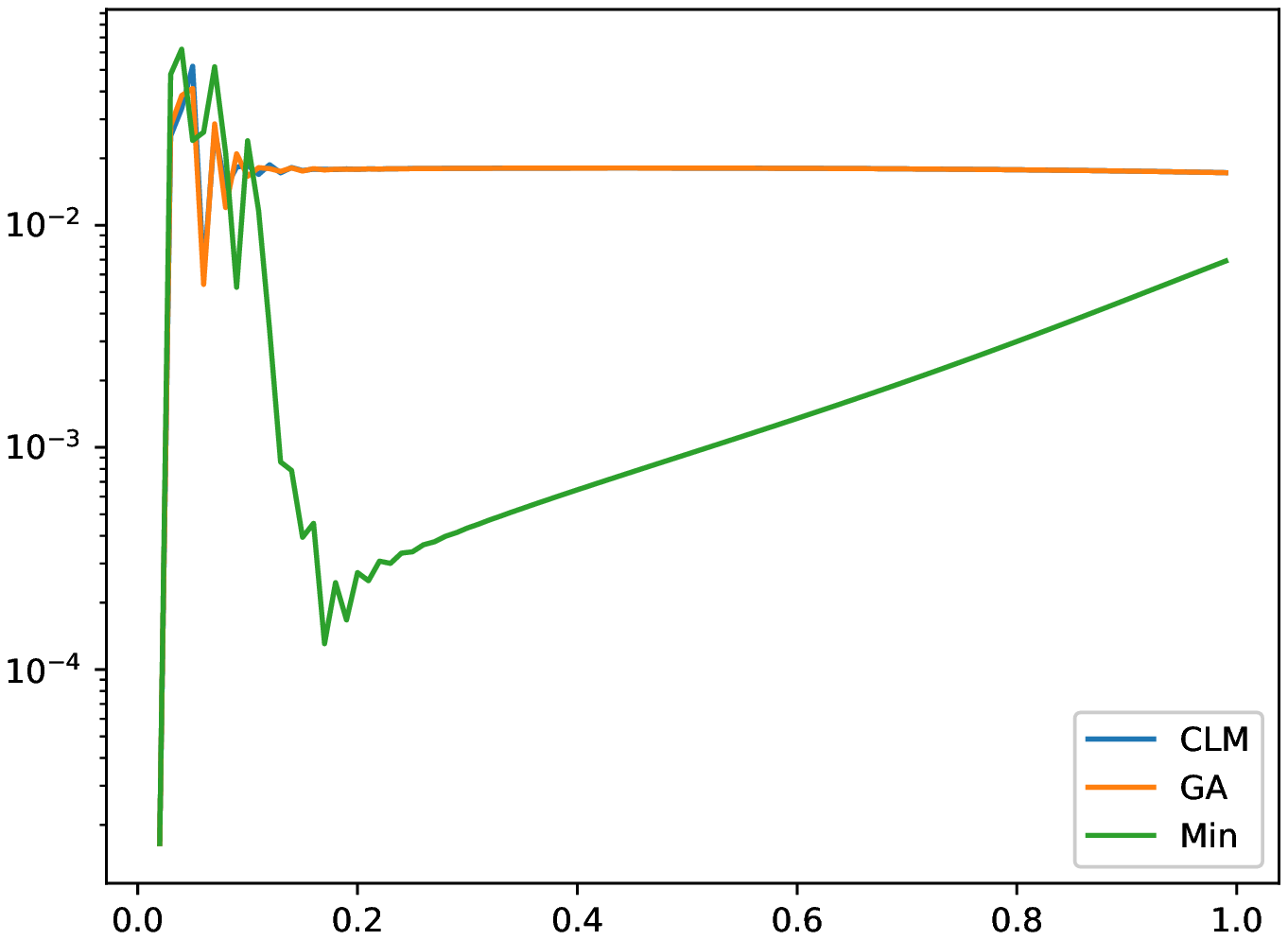}\label{fig3:d}}
	\caption{Comparison between GA, Min, and CLM methods.}
	\label{fig=compare}
	\end{figure}

\section{Conclusions, open problems and future prospects}

There are many open problems and algorithmic improvements possible. The
doubly adaptive algorithm selected smaller values of $\varepsilon $ than $k$
in our tests with the same tolerance for both. A further synthesis of the
methods herein with the modular grad-div algorithm of \cite{FLR18} would
eliminate any conditioning issues in the linear system arising. Developing
doubly adaptive methods of order greater than two (with modular grad-div) is
an important step to greater time accuracy. We mention in particular the new
embedded family of orders 2,3,4 of \cite{DLL18} as a natural extension. The
method of Dahlquist, Liniger and Nevanlinna \cite{DLN83} is unexplored for
PDEs, but has promise in CFD because it is A-stable for both increasing and
decreasing time-steps. Improved error estimators for the second-order method
herein would increase reliability. For AC methods, pressure initialization
and damping of nonphysical acoustics are important problems where further
progress would be useful.

\textbf{Open problems.} The idea of adapting independently $k$ and $%
\varepsilon $ is promising but new so there are many open problems. These
include:

\begin{itemize}
\item Is the $\varepsilon $-adaptation formula $\varepsilon
_{new}=\varepsilon _{old}(TOL/||\nabla \cdot u||)$\ improvable? Perhaps the
quotient should be to some fractional power. Perhaps adapting $\varepsilon $
should be based of a relative error in $||\nabla \cdot u||$, such as $%
||\nabla \cdot u||/||\nabla u||$. Analysis of the local (in time) error in $%
||\nabla \cdot u||$ is needed to support an improvement.
\end{itemize}

\begin{itemize}
\item The $\varepsilon $-adaptation\ strategy seems to need preset limits, $%
\varepsilon _{\min },\varepsilon _{\max }$, to enforce $\varepsilon _{\min
}\leq \varepsilon \leq \varepsilon _{\max }$. The preset of\ $\varepsilon
_{\min }$ is needed because $\nabla \cdot u=0$ cannot be enforced pointwise
in many finite element spaces. Finding a reasonable strategy for these
presets is an open problem. Similarly, it would be useful to develop a
coherent strategy for relating the two tolerances rather than simply picking
them to be equal (as herein).
\end{itemize}

\begin{itemize}
\item Proving convergence to a weak solution of the incompressible NSE of
solutions to the continuum analogs of the GA-method and min-Method for
variable $\varepsilon $ is an important open problem. In this analysis it is
generally assumed that $\varepsilon (t)\rightarrow 0$ in an arbitrary
fashion. A more interesting problem is to link $\varepsilon (t)$ and\ $%
||\nabla \cdot u||$ in the analysis. Similarly, an \'{a} priori error
analysis for variable $\varepsilon $\ is an open problem and may yield
insights on how the variance of $\varepsilon (t)$\ should be controlled
within an adaptive algorithm. The consistency error of the two methods are $%
\mathcal{O}(k+\varepsilon )$ and $\mathcal{O}(k^{2}+\varepsilon )$,
respectively. Energy stability has been proven herein for the first order
method and for the constant time-step, second order method. Thus, error
estimation while technical, should be achievable.
\end{itemize}

\begin{itemize}
\item Comprehensive testing of the variable (first or second) order method
is an open problem. VSVO methods are the most effective for systems of ODEs
but have little penetration in CFD. Testing the relative costs and accuracy
of VSVO in CFD is an important problem.
\end{itemize}

\end{document}